\newtheorem{theorem}{Theorem}
\newtheorem{lemma}[theorem]{Lemma}
\theoremstyle{definition}
\newtheorem{definition}[theorem]{Definition}
\newtheorem{example}[theorem]{Example}
\newtheorem{remark}[theorem]{Remark}
\newcommand{\PP}{{\mathbb P}}
\newcommand{\g}{{\mathfrak g}}
\newcommand{\C}{{\mathbb C}}
\newcommand{\N}{{\mathbb N}}
\renewcommand{\O}{{\mathscr O}}
\newcommand{\M}{{\mathscr M}}
\newcommand{\R}{{\mathbb R}}
\newcommand{\Z}{{\mathbb Z}}
\newcommand{\Aut}{{\operatorname{Aut}}}
\newcommand{\VFA}{ \operatorname{{\rm VF}_{alg}}}
\newcommand{\LieA}{ \operatorname{{\rm Lie}_{alg}}}
\def\SAut{\mathop{\rm SAut}}
\newcommand{\G}{\ensuremath{\mathbb{G}}}
\def\reg{{\mathop{\rm reg}}}
\title{Flexibility properties in Complex Analysis and Affine Algebraic Geometry}
\author{Frank Kutzschebauch}
\address{Frank Kutzschebauch, Institute of Mathematics, University of Bern, Sidlerstrasse 5, CH-3012 Bern, Switzerland}
\email{frank.kutzschebauch@math.unibe.ch}
\subjclass[2010]{Primary 32M05.  Secondary 14L24, 14L30, 32E10, 32M17, 32Q28.}
\keywords{Oka principle, flexibility, density property .}
\date
\thanks{F.~Kutzschebauch was partially supported by Schweizerischer Nationalfond grant 200021-140235/1.  }
\begin{document}\begin{abstract}  

In the last decades  affine algebraic varieties and Stein manifolds with big (infinite-dimensional) automorphism groups have been intensively studied. Several notions expressing that the automorphisms group is big have been proposed.
All of them imply that the manifold in question is an Oka-Forstneri\v c manifold. This important notion has also recently merged from  the intensive studies around the homotopy principle in Complex Analysis. This homotopy principle, which goes back to the 1930's, has had an enormous impact
on the development of the area  of Several Complex Variables and the number of its applications is constantly growing. In this overview article we present 3 classes of properties: 1. density property, 2. flexibility 3. Oka-Forstneri\v c. For each class we give the relevant definitions, its
 most significant features and explain the known implications between all these properties. Many difficult mathematical problems could be solved by  applying the developed theory, we indicate some of the most spectacular ones.
\end{abstract}

\maketitle
\tableofcontents

\section{Introduction}  \label{s:introduction}
This  is a survey of recent developments in Complex Analysis and Affine Algebraic Geometry which emphasize on objects  described as elliptic, in the opposite to hyperbolic in the sense of Kobayashi or more general
in the sense of Eisenman (all Eisenman measures on these objects vanish identically.)

Here is the scheme of properties we are going to discuss, together with the known implications between them. Although the properties do not require the manifolds to be Stein or affine algebraic, some of the implications do. We therefore  assume the manifold to be Stein in the upper row and to be a smooth affine algebraic variety in the lower row.
\bigskip

$$\begin{matrix}\label{scheme}

 \rm{density\:  property\:  (DP)}                          &             \Longrightarrow                            &   \rm{holomorphic\:  flexible}                       &                   \Longrightarrow                     &      {\rm Oka-Forstneric}   \cr
\Uparrow                                         &         &                           \Uparrow               &     &   \cr
 \rm{algebraic\:  density\: property\:  (ADP)}          &                                        &    \rm{algebraic\: flexible}                    &                                      &           \cr

\end{matrix} \eqno{(1)}$$

\bigskip
In each of the following three sections we present one class of  properties  together with main features.  
We hope that researchers from the algebraic and from the holomorphic side can join their efforts to enlarge the class of examples and to find out which of the reverse implications between these properties hold.

In the last section we briefly recall that in the presence of a volume form there is a similar property to (algebraic) density property, called (algebraic) volume density property, which if replacing DP and ADP in scheme $(1)$
by these properties (AVDP, VDP) gives another scheme with the same implications true. Also we elaborate  on the reverse implications in our scheme $(1)$.

We sincerely thank the referee for very carefully reading the article and making many valuable comments. He helped a lot to improve  the presentation. Many thanks to Finnur L\'arusson for careful reading and catching an inaccuracy in a previous version of the text.

%
%
%

\section{Density Property}

\subsection{Definition and main features}
Considering a question promoted by Walter Rudin, Anders\'en and Lempert in 1989 \cite{AL} proved a remarkable fact about the affine $n$-space $n\ge 2$, namely that the group generated by shears (maps of the form $(z_1, \ldots, z_n) \mapsto (z_1, \ldots , z_{n-1}, z_n + f (z_1, \ldots , z_{n-1}))$ where $f \in \O (\C^{n-1})$ is a holomorphic function and any linear conjugate of such a map) and overshears (maps of the form $(z_1, \ldots, z_n) \mapsto (z_1, \ldots , z_{n-1}, z_n  g (z_1, \ldots , z_{n-1}))$ where $g \in \O^\ast (\C^{n-1})$ is a nowhere vanishing holomorphic function and any linear conjugate of such a map) are dense in holomorphic automorphism group of $\C^n$, endowed with compact-open topology. The main importance of their work was not the mentioned   result but the proof itself which implies, as observed by Forstneri\v c and Rosay in \cite{FR} for $ X= \C^n$, the  remarkable  Anders\'en-Lempert theorem, see below. The natural generalization from $\C^n$ to arbitrary manifolds $X$ was made by Varolin \cite{V1} who introduced the following important
property of a complex manifold:

\begin{definition}\label{1.10}
A complex manifold $X$ has the density property if in the
compact-open topology the Lie algebra  generated by
completely integrable holomorphic vector fields on $X$ is dense in
the Lie algebra  of all holomorphic vector fields on
$X$.
 \end{definition}

Here a holomorphic vector field $\Theta$ on a complex manifold $X$  is called completely integrable if the ODE

$$\frac{d}{dt} \varphi (x,t) = \Theta (\varphi (x,t))$$

$$\varphi (x,0) = x$$

\noindent
has a solution $\varphi (x,t)$ defined for all complex times $t \in \C$ and all starting points $x \in X$. It gives  a complex one-parameter subgroup in the holomorphic
automorphism group $\Aut_{hol} (X)$.

The density property is a precise way of saying that the automorphism group of a manifold is big, in particular for a Stein manifold this is underlined by the main result of the theory (see \cite{FR} for $\C^n$, \cite{V1},  a detailed proof can be found in the Appendix of \cite{R} or in  \cite{For}).

\begin{theorem}[\bf Anders\'en-Lempert theorem] \label{AL-Theorem} Let $X$ be a Stein manifold with the density  property
and let  $\Omega$ be an open subset of $X$. Suppose that  $ \Phi : [0,1] \times \Omega \to X$ is a $C^1$-smooth
map  such that

{\rm (1)} $\Phi_t : \Omega \to X$ is holomorphic and injective   for every  $ t\in [0,1]$,

{\rm (2)} $\Phi_0 : \Omega \to X$ is the natural embedding of $\Omega$ into $X$, and

{\rm (3)} $\Phi_t  (\Omega) $ is  a Runge  
subset\footnote{Recall that an open subset $U$ of $X$ is Runge if any holomorphic function on $U$ can
be approximated by global holomorphic functions on $X$ in the compact-open topology. Actually, for $X$ Stein by Cartan's Theorem A this definition
implies more: for any coherent sheaf on $X$ its section over $U$ can be approximated by global sections.}  of $X$ for every $t\in [0,1]$.

Then for each $\epsilon >0 $ and every compact subset $K \subset \Omega$ there is a continuous family,
$\alpha: [0, 1] \to \Aut_{hol} (X)$  of  holomorphic  automorphisms of $X$
such that  $$\alpha_0 = id \, \, \, {\rm and} \, \, \,\vert \alpha_t - \Phi_t \vert_K <\epsilon {\rm \ \  for\ \  every\ \ } t \in [0,1]$$ 

\end{theorem}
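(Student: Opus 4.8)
The plan is to pass from the prescribed isotopy of partial maps $\Phi_t$ to a time-dependent holomorphic vector field, to approximate this field by elements of the Lie algebra generated by completely integrable fields using the density property, and finally to integrate those approximations into genuine global automorphisms of $X$ by a time-discretization argument. First I would differentiate the family in the time variable. Since each $\Phi_t$ is holomorphic and injective on $\Omega$ and $\Phi$ is $C^1$ in $t$, setting $\Omega_t := \Phi_t(\Omega)$ and
$$ V_t(y) := \frac{\partial}{\partial s}\Big|_{s=t}\, \Phi_s\bigl(\Phi_t^{-1}(y)\bigr), \qquad y \in \Omega_t, $$
produces a continuous family of holomorphic vector fields $V_t$ on the open sets $\Omega_t$ whose non-autonomous flow from time $s$ to time $t$ is exactly $\Phi_t \circ \Phi_s^{-1}$. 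Thus reproducing $\Phi_t$ on $K$ is equivalent to approximating the flow of $V_t$ on the transported compact sets $\Phi_t(K)$.

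Next comes the approximation step, where the hypotheses enter. For each fixed $t$, the Runge property of $\Omega_t$ in $X$ lets me approximate $V_t$, uniformly on a compact neighborhood of $\Phi_t(K)$, by a vector field holomorphic on all of $X$; the density property then approximates the latter by a finite combination of iterated Lie brackets of completely integrable fields $\Theta_1, \dots, \Theta_m$. The conceptual heart of the theorem is that the time-$\tau$ flow of any such Lie-algebra element can in turn be approximated, uniformly on compacta, by a composition of the globally defined time-maps $\phi^{\Theta_i}_s \in \Aut_{hol}(X)$: sums are handled by the Lie--Trotter product formula and brackets by the commutator relation
$$ \phi^{\Psi}_{-s}\circ \phi^{\Theta}_{-s}\circ \phi^{\Psi}_{s}\circ \phi^{\Theta}_{s} \;\approx\; \phi^{[\Theta,\Psi]}_{s^2} \quad \text{to leading order in } s. $$
Here completeness of the $\Theta_i$ is exactly what guarantees that these compositions are automorphisms of $X$ rather than merely local flows, which is why the correct hypothesis is the density property for \emph{completely integrable} fields.

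Finally I would assemble the global path. Partition $[0,1]$ into $0=t_0<\cdots<t_N=1$; over each subinterval the true flow $\Phi_{t_{j+1}}\circ\Phi_{t_j}^{-1}$ differs from the time-$(t_{j+1}-t_j)$ flow of the frozen field $V_{t_j}$ by an error that, by uniform continuity of $t\mapsto V_t$ on the compact time interval, is $o(t_{j+1}-t_j)$ uniformly on the relevant compacta as the mesh tends to zero. I replace each frozen flow by an automorphism $\beta_j$ as above, set $\alpha_{t_{j+1}} = \beta_j\circ\alpha_{t_j}$ starting from $\alpha_0 = id$, and interpolate inside each step to obtain a continuous family $\alpha_t$.

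I expect the main obstacle to lie precisely in this last assembly: the errors from $N$ successive compositions must be prevented from accumulating, which requires a Gronwall-type estimate exploiting uniform Lipschitz bounds of the maps on the relevant compacta, together with careful bookkeeping to ensure that the transported compact sets $\Phi_t(K)$ — and small neighborhoods of them on which all approximations remain valid — stay inside the domains $\Omega_t$ throughout the construction. Choosing the partition fine enough and the per-step approximations accurate enough (controlled telescopically rather than merely as $\epsilon/N$) then delivers the desired uniform estimate $\vert \alpha_t - \Phi_t\vert_K < \epsilon$ for all $t\in[0,1]$.
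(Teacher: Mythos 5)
The paper does not actually prove Theorem \ref{AL-Theorem}; it defers to \cite{FR}, \cite{V1} and the detailed account in \cite{For}. Your outline --- differentiating the isotopy to a time-dependent holomorphic field, globalizing it on a neighborhood of $\Phi_t(K)$ via the Runge property, approximating the global field by Lie combinations of completely integrable fields via the density property, replacing sums and brackets by Euler/Trotter and commutator compositions of the genuine one-parameter automorphism groups, and controlling the accumulated error over a fine partition with Lipschitz/Gronwall bookkeeping --- is precisely the standard argument given in those references, so your proposal is essentially the same (and correct) proof.
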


Philosophically one can think of the density property as a tool for realizing  local movements  by  global  maps   (automorphisms).    In some sense it is a substitute  for cutoff functions which in the differentiable category are used for globalizing local movements. In the holomorphic category we of course loose control on automorphism
outside the compact set $K$. This makes constructions more complicate but still constructing  sequences of automorphisms  by iterated use of the Anders\'en-Lempert theorem
has led to remarkable constructions.

Let us further remark that the implications of the density property for manifolds which are not Stein have not been explored very much yet. If the manifold is compact all (holomorphic) vector fields are completely integrable, the
density property trivially hold and thus cannot give any further information on the manifold.

\begin{remark} Anders\'en and Lempert proved that every algebraic vector field on $\C^n$ is a finite sum of  algebraic shear fields (fields of form $ p(z_1, \ldots z_{n-1}) \frac \partial {\partial z_n}$ for a polynomial $p \in \C[\C^{n-1}]$ and their linear conjugates, i.e.,  fields who's one-parameter subgroups consist of shears)  and overshear fields 
(fields of form $ p(z_1, \ldots z_{n-1}) z_n  \frac \partial {\partial z_n}$ for a polynomial $p \in \C[\C^{n-1}]$ and their linear conjugates, i.e.,  fields whose one-parameter subgroups consist of overshears). Together with the fact that any holomorphic automorphism of $\C^n$ can be joined to the identity by a smooth pat, this shows how the Anders\'en-Lempert theorem implies that the group generated by shears and overshears is dense in the holomorphic automorphism group of $\C^n$
\end{remark}

The algebraic density property can be viewed as a tool to prove the density property, whereas the ways of proving it are purely algebraic work.

\begin{definition}
An affine algebraic manifold $X$ has the algebraic density
property if the Lie algebra $\LieA (X)$ generated by completely
integrable algebraic vector fields on it coincides with  the Lie
algebra $\VFA (X)$ of all algebraic vector fields on it.
\end{definition}

An algebraic vector field is an algebraic section of the tangent bundle, for example on  $\C^n$ it can be written as $\sum_{i=1}^n p_i (z_1, \ldots , z_n) \frac{\partial}{\partial z_i}$ with polynomials $p_i \in \C[\C^n]$. If it is completely integrable, 
its flow gives a one parameter subgroup in the holomorphic automorphism group not necessarily in the algebraic automorphism group. For example,  a polynomial shear field of the form $p (z_1, \dots , z_{n-1}) z_n  \frac{\partial}{\partial z_n}$
has the flow map $\gamma (t, z) = (z_1, \ldots , z_{n-1}, \exp (t p (z_1, \dots , z_{n-1}) ) z_n)$. This is the reason that algebraic density property is in the intersection of affine algebraic geometry and complex analysis. It is an algebraic notion, proven using algebraic methods
but has implications for the holomorphic automorphism group.

\subsection{Applications and examples}

A first application we like to mention is to the notoriously difficult question whether every open Riemann surface can be properly holomorphically embedded into $\C^2$. This is the only dimension for which   the conjecture of Forster \cite{Fo}, saying
that every Stein manifold of dimension $n$ can be properly holomorphically embedded into $\C^N$ for $N= [\frac{n}{2}] + 1$, is still unsolved. The conjectured dimension is sharp by examples of Forster \cite{Fo} and has been proven by Eliashberg, Gromov \cite{EG} and Sch\"urmann \cite{Sch} for all dimensions $n\ge 2$.
Their methods of proof fail in dimension $n=1$. But Fornaess Wold invented a clever combination of a use of shears (nice  projection property) and Theorem \ref{AL-Theorem} which led to  many new embedding theorems for open Riemann surfaces. As an example
we like to mention the following two recent results of Forstneri\v c and Fornaess Wold \cite{FW}, \cite{FW1} the first of them being the most general one for open subsets of the complex line:

\begin{theorem} Every domain in the Riemann sphere with at least one  and at most countably many boundary components, none of which are points, admits a proper holomorphic embedding into $\C^2$.
\end{theorem}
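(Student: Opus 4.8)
The plan is to obtain the embedding as the limit of a sequence of automorphisms of $\C^2$ applied to an initial, non-proper embedding, using the Anders\'en--Lempert theorem (Theorem \ref{AL-Theorem}) to push the boundary of the domain out to infinity. First I would normalise the situation: writing the domain as $\Omega \subset \PP^1$, I choose a point in one of the complementary continua (here the hypothesis that no boundary component is a point guarantees these are non-degenerate) and apply a M\"obius transformation sending it to $\infty$, so that $\Omega \subset \C$. I then exhaust $\Omega$ by an increasing sequence of relatively compact, finitely connected subdomains $\Omega_1 \Subset \Omega_2 \Subset \cdots$ with smooth boundary curves and no point components, arranged so that each $\Omega_j$ is Runge in $\Omega_{j+1}$ (equivalently, the exhaustion introduces no new holes).

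Next I would produce an initial holomorphic embedding $\iota_0 \colon \Omega \to \C^2$ enjoying Fornaess Wold's \emph{nice projection property} with respect to $\pi(z,w)=z$: the point is to arrange the image so that $\pi$ restricted to it is well-behaved near the boundary, which is exactly what allows one to move boundary curves off to infinity by shears $(z,w)\mapsto (z,\,w+f(z))$ without colliding with the part of the surface one wishes to preserve. The heart of the argument is then an induction. Suppose the embedding has already been adjusted so that the image of $\partial\Omega_{j}$ lies outside a large ball. Using the nice projection property I first apply a shear that carries the annular region $\overline{\Omega_{j+1}}\setminus\Omega_{j}$ to a Runge position far from the origin; condition (3) of Theorem \ref{AL-Theorem} is then met, precisely because of the Runge control built into the exhaustion and the projection property, so the movement can be globalised to an automorphism $\Phi_{j+1}\in\Aut_{hol}(\C^2)$. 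I would choose $\Phi_{j+1}$ so close to the identity on a neighbourhood of the image of $\overline{\Omega_{j-1}}$ that the infinite composition $F=\lim_{j}\Phi_j\circ\cdots\circ\Phi_1\circ\iota_0$ converges uniformly on compact subsets of $\Omega$.

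By the usual telescoping estimates the limit $F$ is then a holomorphic map that is still injective and an immersion on all of $\Omega$, hence an embedding, and it is proper because each compact piece of the boundary has been driven outside every ball at some finite stage and kept there thereafter, so that preimages of compacts under $F$ are compact. I expect the main obstacle to be twofold: first, establishing that the initial embedding can be chosen with the nice projection property and, more delicately, that this property together with the Runge condition required by Theorem \ref{AL-Theorem} survives every step of the induction; and second, organising the passage from finitely many to countably many boundary components, which requires a diagonal argument distributing the infinitely many components among the exhaustion steps so that each is eventually pushed to infinity. It is here that the hypotheses enter decisively: a point boundary component would force the surface to close up across a puncture and could not be driven to infinity by this mechanism, while uncountably many components would defeat any such exhaustion scheme.
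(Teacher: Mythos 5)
Your outline is essentially the strategy the paper itself points to: the paper does not prove this theorem but attributes it to Forstneri\v c and Fornaess Wold, describing the method as precisely the combination of the nice projection property (shears) with the Anders\'en--Lempert theorem that you sketch, including the inductive pushing of boundary components to infinity by automorphisms of $\C^2$ and the limit argument. Your proposal is a faithful high-level account of that approach, and you correctly identify the two genuinely hard points (constructing and preserving the nice projection property, and handling countably many boundary components) that occupy the cited papers.
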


\begin{theorem}  If $\bar \Sigma$ is a (possibly reducible) compact complex curve in $\C^2$ with boundary $\partial \Sigma$ of class $C^r$ for some $ r > 1$, then the inclusion map 
$i: \Sigma = \bar \Sigma\setminus \Sigma \to \C^2$ can be approximated, uniformly on compacts in $\Sigma$, by proper holomorphic embeddings $\Sigma \to \C^2$. 
\end{theorem}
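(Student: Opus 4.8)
The plan is to realize the desired proper embedding as a limit of holomorphic automorphisms of $\C^2$ obtained by iterating the Anders\'en-Lempert theorem (Theorem \ref{AL-Theorem}), using shears to guarantee at each stage the Runge hypothesis that makes that theorem applicable. First I would exhaust the open curve $\Sigma = \bar\Sigma \setminus \partial\Sigma$ by an increasing sequence of smoothly bounded compact subdomains $\Sigma_1 \Subset \Sigma_2 \Subset \cdots$ with $\bigcup_j \Sigma_j = \Sigma$, chosen so that each difference $\bar\Sigma \setminus \Sigma_j$ is a thin collar neighbourhood of the boundary; the hypothesis that $\partial\Sigma$ is of class $C^r$ with $r>1$ is exactly what lets these collars be taken as genuinely embedded bordered surfaces with controlled geometry. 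The target map $F:\Sigma\to\C^2$ will be produced as $F=\lim_k \Phi_k\circ\cdots\circ\Phi_1$, where each $\Phi_k\in\Aut_{hol}(\C^2)$ is close to the identity on the image of $\Sigma_{k-1}$ (to force convergence on compacts and approximation of the inclusion $i$) and simultaneously pushes the newly exposed shell $\Sigma_k\setminus\Sigma_{k-1}$ outside the ball of radius $k$ (to force properness of the limit).

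The heart of the argument is the single inductive \emph{pushing-out} step: given the bordered curve as currently embedded, construct an automorphism that displaces a neighbourhood of its boundary arbitrarily far towards infinity while essentially fixing a prescribed compact core. To do this I would first apply a shear of $\C^2$ putting the curve into a position where the linear projection $\pi(z,w)=z$ is proper and behaves nicely on the collar to be moved. This is the nice projection property alluded to after Theorem \ref{AL-Theorem}: after the shear, the part of the curve to be pushed lies over an annular region $\{r<|z|<R\}$ and escapes to infinity along the $z$-direction. With the curve in such a position one writes down an explicit $C^1$ isotopy $\Phi_t$, $t\in[0,1]$, with $\Phi_0=\mathrm{id}$, each $\Phi_t$ holomorphic and injective on a neighbourhood $\Omega$ of the core together with the collar, which slides the collar out along increasing $|z|$.

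The decisive point, and the step I expect to be the main obstacle, is verifying that each $\Phi_t(\Omega)$ is a Runge (equivalently, for these one-dimensional configurations, polynomially convex) subset of $\C^2$, since this is precisely hypothesis (3) of Theorem \ref{AL-Theorem} and it fails for an arbitrary displacement of an embedded curve. This is where the shear and the nice projection property earn their keep: by keeping the moved collar fibered over an annulus in the $z$-plane and disjoint, in a controlled way, from the core, one arranges that no obstruction to polynomial convexity is created during the isotopy. Granting this, Theorem \ref{AL-Theorem} furnishes a global automorphism $\Phi_k$ of $\C^2$ approximating $\Phi_1$ as closely as desired on the relevant compact set.

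Finally I would assemble the induction by a standard telescoping-of-errors scheme: choosing the approximation at stage $k$ much finer than the amount by which later stages can perturb it, the compositions $\Phi_k\circ\cdots\circ\Phi_1$ converge uniformly on every compact subset of $\Sigma$ to a holomorphic map $F$. The uniform closeness to the identity on each $\Sigma_{k-1}$ guarantees that $F$ approximates $i$ and, via Cauchy estimates controlling the derivatives and separation of points on each fixed $\Sigma_j$, that $F$ remains injective in the limit; the fact that the shell $\Sigma_k\setminus\Sigma_{k-1}$ is sent beyond the ball of radius $k$ at stage $k$ and only summably perturbed thereafter guarantees that $F$ is proper. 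Hence $F:\Sigma\to\C^2$ is a proper holomorphic embedding approximating the inclusion, as required.
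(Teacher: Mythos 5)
The paper itself gives no proof of this theorem; it is quoted from Forstneri\v c and Fornaess Wold \cite{FW1}, with only the hint that the method combines shears (the nice projection property) with Theorem \ref{AL-Theorem}. Measured against that actual proof, your sketch correctly captures the outer shell of the argument --- the exhaustion, the inductive composition of automorphisms pushing successive collars outside larger and larger balls, the identification of the Runge/polynomial-convexity hypothesis as the crux, and the telescoping convergence giving a proper injective limit --- but it is missing the one genuinely new ingredient of \cite{FW1}, and the step you do propose in its place cannot work as stated.

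The problem is at the very start of your inductive step. You claim that a shear of $\C^2$ puts the curve ``into a position where the part of the curve to be pushed lies over an annular region $\{r<|z|<R\}$ and escapes to infinity along the $z$-direction.'' But $\bar\Sigma$ is \emph{compact}, and a shear is a global automorphism of $\C^2$; no automorphism can make a compact set unbounded, so the collar can never ``escape to infinity'' and the projection $\pi(z,w)=z$ restricted to the curve is a map onto a compact region of $\C$, not a proper map to $\C$. The nice projection property, which is exactly what makes the Runge verification in Wold's pushing argument go through, is a property of \emph{unbounded} closed curves whose ends go to infinity over the $z$-plane. The missing idea is the exposing of boundary points: one first perturbs the embedding so that each boundary component of $\bar\Sigma$ acquires an exposed point $p_j$, and then applies a \emph{rational} shear of the form $(z,w)\mapsto\bigl(z+\sum_j c_j/(w-w_j),\,w\bigr)$ --- not an automorphism of $\C^2$ --- which sends the exposed points to infinity while keeping the rest of the curve embedded. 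Only after this step is the curve an unbounded embedded bordered curve with the nice projection property, and only then can the iterated Anders\'en--Lempert pushing scheme you describe (essentially Wold's argument from \cite{W}) be set in motion. Without the exposing step your induction cannot be initialized, and the Runge condition you rightly single out as the main obstacle has no mechanism behind it.
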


Many versions of embeddings with interpolation are also known and proven using the same  methods invented by Fornaess Wold in \cite{W} .

Another application is to construct non-straightenable holomorphic embeddings of $\C^k$ into $\C^n$ for all pairs of dimensions $0<k<n$, a fact which is contrary to the situation in affine algebraic geometry, namely contrary to the famous Abhyankar-Moh-Suzuki theorem for $k=1, n=2$ and also to work of Kaliman \cite{Ka} or  $2k+1 < n$, whereas straightenability for the other dimension pairs is  still unknown in algebraic geometry. The most recent and quite striking  result in this direction says that there are even holomorphic families of pairwise non-equivalent  holomorphic embeddings (refering to holomorphic automorphisms of the source and target in the definition below).
Here non-straightenable for an embedding  $\C^k$ into $\C^n$ means to be not equivalent to the standard embedding.

\begin{definition}\label{def-eq-emb}
Two embeddings $\Phi,\Psi\colon X\hookrightarrow\C^n$ are {\it equivalent} if there 
exist automorphisms $\varphi\in\Aut(\C^n)$ and $\psi\in\Aut(X)$ such that 
$\varphi\circ\Phi=\Psi\circ\psi$.
\end{definition}

\begin{theorem}  see \cite{Kutzschebauch-Lodin}.
  \label{main1} Let $n, l$ be natural numbers with $n\ge l+2$.
There exist, for $k=n-l-1$,  a family of holomorphic embeddings of
$\C^l$ into $\C^n$ parametrized by $\C^k$, such that for
different parameters $w_1\neq w_2\in \C^k$ the embeddings
$\psi_{w_1},\psi_{w_2}:\C^l \hookrightarrow \C^{n}$ are
non-equivalent.
\end{theorem}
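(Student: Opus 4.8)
The plan is to recast the statement as a question about subsets of $\C^n$ and to separate the construction of the family from the detection of non-equivalence. First I would observe that, since each $\psi_w$ is a biholomorphism onto its image $\Sigma_w:=\psi_w(\C^l)$, two embeddings $\psi_{w_1},\psi_{w_2}$ are equivalent in the sense of Definition~\ref{def-eq-emb} \emph{if and only if} there is an automorphism $\varphi\in\Aut(\C^n)$ with $\varphi(\Sigma_{w_1})=\Sigma_{w_2}$; the required $\psi\in\Aut(\C^l)$ is then forced to be $\psi_{w_2}\inv\circ\varphi\circ\psi_{w_1}$. Thus it suffices to produce a holomorphic family $w\mapsto\Sigma_w$ of properly embedded copies of $\C^l$ together with a biholomorphic invariant of the complement $\C^n\setminus\Sigma_w$ that depends nontrivially on $w$: any $\varphi$ as above would restrict to a biholomorphism $\C^n\setminus\Sigma_{w_1}\to\C^n\setminus\Sigma_{w_2}$ and so preserve that invariant. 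Note that the family $w\mapsto\Sigma_w$ will be a holomorphic deformation, so the diffeomorphism type of the complement is locally constant in $w$; hence no topological or smooth invariant can separate nearby parameters, and the distinguishing datum must be a genuinely holomorphic invariant of the complex structure of $\C^n\setminus\Sigma_w$. This is consistent with the codimension $n-l=k+1\ge 2$, which already makes all the complements connected and simply connected.

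The construction of the family is where the density property of $\C^n$ enters. Starting from the standard embedding, I would deform the image, over each parameter $w$, so as to shape its complement --- for instance forcing it to contain a prescribed $w$-dependent analytic configuration, or to have a prescribed hyperbolicity locus --- realizing the successive deformations by genuine automorphisms via the Anders\'en--Lempert theorem (Theorem~\ref{AL-Theorem}), which applies since $\C^n$ has the density property. To obtain holomorphic dependence on $w\in\C^k$ rather than a merely continuous one, I would run this through a parametrized version of the Anders\'en--Lempert and Oka machinery, approximating the desired $w$-family of injective maps by a $w$-family of automorphisms on larger and larger compacta and passing to a limit along a carefully chosen exhaustion, so that each $\psi_w$ is proper and injective and the whole family is holomorphic in $w$.

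For the non-equivalence I would take as invariant a biholomorphic datum of the complement that is insensitive to the wildness of $\Aut(\C^n)$ --- a natural candidate, in keeping with the elliptic-versus-hyperbolic theme of the survey, is the Kobayashi-- or Eisenman-degeneracy locus of $\C^n\setminus\Sigma_w$, or the isomorphism class of a canonically attached subvariety --- engineered so that this datum equals, or at least determines, $w$. Since a biholomorphism $\C^n\setminus\Sigma_{w_1}\to\C^n\setminus\Sigma_{w_2}$ must carry such a canonical locus to the corresponding one, $w_1\ne w_2$ forces the embeddings to be non-equivalent, which is the assertion of the theorem.

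I expect the main obstacle to be the tension between the two halves of the argument. The Anders\'en--Lempert theorem gives approximation only on compacta and no control outside $K$, so forcing properness, injectivity \emph{and} holomorphic dependence on $w$ simultaneously, while maintaining the Runge condition required by Theorem~\ref{AL-Theorem}, demands a delicate inductive exhaustion with quantitative error control. Harder still is arranging the distinguishing invariant to be at once robust enough to survive the purely $C^0$-approximation built into the theorem and to be preserved under the full, non-compact automorphism group, yet sensitive enough to separate uncountably many parameters $w$ while varying holomorphically. Reconciling this rigidity of the invariant with the flexibility used in the construction is the real heart of the proof.
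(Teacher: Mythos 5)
First, a caveat about the ground truth: the survey does not prove Theorem~\ref{main1} at all --- it is quoted from \cite{Kutzschebauch-Lodin} --- so the comparison must be with the proof given there. At the level of architecture your plan does match that proof. The reduction of equivalence to the existence of $\varphi\in\Aut(\C^n)$ with $\varphi(\Sigma_{w_1})=\Sigma_{w_2}$ is correct (and the formula $\psi=\psi_{w_2}\inv\circ\varphi\circ\psi_{w_1}$ is exactly right), and the idea of separating parameters by a hyperbolicity-type biholomorphic invariant of the complement is precisely the mechanism used: it goes back to the non-straightenable embeddings of Forstneri\v c--Globevnik--Rosay and to Derksen--Kutzschebauch, the point being that the complement of the standard $\C^l$ is $\C^l\times(\C^{n-l}\setminus\{0\})$, which is maximally non-hyperbolic, so Eisenman-degeneracy data of the complement detects non-standardness; the family itself is indeed built by a parametrized Anders\'en--Lempert/interpolation induction along an exhaustion, with convergence only near the submanifold.

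Nevertheless the proposal has a genuine gap, and you have located it yourself in your closing paragraph: neither the family nor the invariant is ever actually produced. ``Force the complement to contain a prescribed $w$-dependent configuration, or to have a prescribed hyperbolicity locus'' restates the problem rather than solving it. What is missing is (a) a concrete $w$-dependent target --- in \cite{Kutzschebauch-Lodin} this is, roughly, an interpolation condition on a discrete subset arranged so that an Eisenman-type hyperbolicity/degeneracy locus of $\C^n\setminus\Sigma_w$ encodes $w$ --- together with the two verifications that this locus is preserved by \emph{every} automorphism of $\C^n$ carrying one image to the other and that it genuinely separates all points of $\C^k$; and (b) the quantitative induction (exhaustion, Runge conditions, error estimates) yielding a proper, injective limit that is holomorphic in $w$. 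Part (b) is technical but standard Anders\'en--Lempert bookkeeping; part (a) is the actual content of the theorem, and the tension you correctly identify --- an invariant robust under compact-open approximation yet rigid under the full automorphism group --- is resolved only by exhibiting the invariant, not by observing that one is needed. So: right strategy, honestly flagged obstacles, but the decisive step is absent rather than merely compressed.
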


We would like to mention a nice application of Theorem \ref{main1} to actions of compact (or equivalently complex
reductive, see \cite{Ku}) groups on $\C^n$. It was a long standing problem,  whether all holomorphic actions of such groups on affine space are linear after a change of variables (see for example the overview article \cite{Huckleberry}). The first counterexamples to that (Holomorphic Linearization) problem were constructed by Derksen and the first author in \cite{DK1}. The method from \cite{DK1} is holomorphic in a parameter and therefore  applied to our parametrized situation leads to the following result (\cite{Kutzschebauch-Lodin})

\begin{theorem} \label{action1}
For any $n\ge 5$ there is a holomorphic family of $\C^*$-actions on $\C^n$ para\-metrized by $\C^{n-4}$
$$\C^{n-4} \times \C^* \times \C^n \to \C^n , \quad  (w, \theta, z) \mapsto \theta_w (z)$$ so that for
different parameters $w_1\neq w_2\in \C^{n-4}$ there is no equivariant isomorphism between the actions $\theta_{w_1}$ and  $\theta_{w_2}$.
\end{theorem}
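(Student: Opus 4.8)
The plan is to derive the statement from the parametrized family of non-equivalent embeddings in Theorem \ref{main1} by feeding it into the construction of non-linearizable $\C^*$-actions of \cite{DK1}. First I would fix the bookkeeping of dimensions. In the notation of Theorem \ref{main1} take $l=3$; then $k=n-l-1=n-4$, and the hypothesis $n\ge l+2$ becomes exactly $n\ge 5$. Thus Theorem \ref{main1} provides a holomorphic family $\psi_w\colon\C^3\hookrightarrow\C^n$, $w\in\C^{n-4}$, of embeddings that are pairwise non-equivalent in the sense of Definition \ref{def-eq-emb}. This family is the only external input; everything else is the machinery of \cite{DK1}.

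Next I would run the construction of \cite{DK1} with $\psi_w$ as input. The output is a $\C^*$-action $\theta_w$ on $\C^n$ that is a non-linearizable modification of the standard linear action on $\C^n=\C^3\times\C^{n-3}$ (acting trivially on $\C^3$ and with nonzero weights on $\C^{n-3}$), arranged so that its fixed-point set is the exotically embedded copy $\psi_w(\C^3)$ rather than the linear subspace $\C^3\times\{0\}$. The key feature I would take from \cite{DK1} is that this modification is produced by an Anders\'en--Lempert type approximation scheme that can be carried out with holomorphic dependence on all auxiliary data. Applied to the holomorphic family $\psi_w$, it yields a map $\C^{n-4}\times\C^\ast\times\C^n\to\C^n$, $(w,\theta,z)\mapsto\theta_w(z)$, that is holomorphic in $(w,\theta,z)$, i.e. a genuine holomorphic family of $\C^*$-actions.

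The heart of the argument is the descent step: I must show that the equivariant-isomorphism class of $\theta_w$ remembers the embedding $\psi_w$ up to equivalence. Any equivariant isomorphism $\Phi\in\Aut(\C^n)$ between $\theta_{w_1}$ and $\theta_{w_2}$ maps fixed points to fixed points bijectively, hence restricts to a biholomorphism $\psi_{w_1}(\C^3)\to\psi_{w_2}(\C^3)$. Setting $\psi:=\psi_{w_2}^{-1}\circ\Phi\circ\psi_{w_1}\in\Aut(\C^3)$ gives $\Phi\circ\psi_{w_1}=\psi_{w_2}\circ\psi$, which is precisely the statement that $\psi_{w_1}$ and $\psi_{w_2}$ are equivalent embeddings. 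By the non-equivalence asserted in Theorem \ref{main1} this forces $w_1=w_2$, so for $w_1\ne w_2$ no equivariant isomorphism between $\theta_{w_1}$ and $\theta_{w_2}$ can exist.

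I expect the main obstacle to be verifying the two properties that make the descent work, both of which are really properties of the construction \cite{DK1} rather than formal consequences: that the fixed-point set of $\theta_w$ is exactly $\psi_w(\C^3)$ with the prescribed embedding, so that the embedding is a bona fide equivariant invariant of the action; and that the whole Anders\'en--Lempert construction can be performed with holomorphic dependence on the parameter $w$. The first guarantees faithfulness --- that distinct embeddings yield genuinely non-equivalent actions --- while the second upgrades a single non-linearizable action to a holomorphic family. Once these are in hand, Theorem \ref{main1} supplies the remaining content at no extra cost.
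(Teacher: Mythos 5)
Your proposal follows exactly the route the paper takes: the survey's entire ``proof'' of Theorem \ref{action1} is the remark that the construction of non-linearizable actions in \cite{DK1} is holomorphic in parameters and can therefore be fed the family of pairwise non-equivalent embeddings from Theorem \ref{main1}, with the non-conjugacy of the resulting actions descending (as you spell out via the fixed-point set) to the non-equivalence of the embeddings, everything else being deferred to \cite{DK1} and \cite{Kutzschebauch-Lodin}. The only caveat is that your specific bookkeeping ($l=3$, fixed-point set equal to $\psi_w(\C^3)\subset\C^n$) cannot be checked against the survey, which gives no details of the \cite{DK1} construction; but since any fixed codimension offset between the embedding's target and the action's ambient space yields the same parameter count $\C^{n-4}$ and the same bound $n\ge 5$, this does not affect the argument.
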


The linearization problem for holomorphic $\C^*$-actions on $\C^n$ is thus solved to the positive for $n=2$ by Suzuki \cite{Su} and still open for $n=3$. For $n=4$ there are uncountably many actions (non-linearizable ones among them)
\cite{DK} and for $n\ge 5$ Theorem \ref{action1}  implies that there are families. Moreover, there are families including a linear action as a single member of the family;

\begin{theorem} \label{action2}
For any $n\ge 5$ there is a holomorphic family of $\C^*$-actions on $\C^n$ parametrized by $\C$
$$\C \times \C^* \times \C^n \to \C^n\quad (w, \theta, z) \mapsto \theta_w (z)$$ so that for
different parameters $w_1\neq w_2\in \C$ there is no equivariant isomorphism between the actions
$\theta_{w_1}$ and $\theta_{w_2}$. Moreover, the action $\theta_0$ is linear.
\end{theorem}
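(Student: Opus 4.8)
The plan is to deduce the theorem from the existence of a suitable family of embeddings, applying the construction of Derksen and the author in \cite{DK1}, which turns embeddings into $\C^*$-actions and is holomorphic in parameters. Observe first that the parametrization by $\C$ together with pairwise non-equivalence of the members would already follow from Theorem \ref{action1} by restricting its parameter $w$ to a complex line in $\C^{n-4}$; the genuinely new content of the present statement is the existence of one \emph{linear} member $\theta_0$. The whole point is therefore to run the construction starting from a family of embeddings that contains the standard embedding as its member over $w=0$.

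So I would first recall the mechanism of \cite{DK1}. To a holomorphic embedding $\psi\colon\C^l\hookrightarrow\C^n$ (in the relevant range, where one may take $l=3$ and $n\ge5$) it associates a $\C^*$-action $\theta^{\psi}$ on $\C^n$ whose fixed-point set is the image $\psi(\C^l)$ and which is a fixed linear slice representation transverse to it. Two features matter: the standard embedding is sent to a linear action, and an equivariant isomorphism $\theta^{\psi_1}\cong\theta^{\psi_2}$ must carry fixed-point set to fixed-point set, hence descends to an equivalence of $\psi_1$ and $\psi_2$ in the sense of Definition \ref{def-eq-emb}. Moreover the construction depends holomorphically on any parameters feeding into $\psi$, since the underlying Anders\'en--Lempert steps (Theorem \ref{AL-Theorem}) can be performed with holomorphic parameter dependence.

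Next I would construct a family $\psi_w\colon\C^l\hookrightarrow\C^n$, $w\in\C$, with $\psi_0$ the \emph{standard} embedding and $\psi_{w_1}\not\sim\psi_{w_2}$ whenever $w_1\ne w_2$. This is the analogue of Theorem \ref{main1} with a one-dimensional parameter and a distinguished standard member. Following the Fornaess Wold / Anders\'en--Lempert technique, one starts from the standard embedding and applies a $w$-dependent sequence of shears and automorphisms from Theorem \ref{AL-Theorem}, normalized so that at $w=0$ nothing happens, while for $w\ne0$ a holomorphic invariant of the complement $\C^n\setminus\psi_w(\C^l)$ (of the kind used in \cite{Kutzschebauch-Lodin}) is made to depend injectively on $w$. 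Setting $\theta_w:=\theta^{\psi_w}$ then yields a holomorphic family with $\theta_0$ linear and, by the descent property above, with $\theta_{w_1}\cong\theta_{w_2}$ impossible for $w_1\ne w_2$.

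The main obstacle is the construction of this embedding family: one must simultaneously guarantee holomorphic dependence of the entire Anders\'en--Lempert process on $w$, injectivity (not merely non-constancy) of the separating invariant in the single parameter $w$, and---the delicate normalization---that degenerating $w$ to $0$ returns the standard embedding \emph{exactly}, not merely an embedding equivalent to it, so that $\theta_0$ is genuinely linear. Keeping the invariant nontrivial for $w\ne0$ while forcing this normalization at $w=0$ is the crux; the passage from equivariant isomorphisms of the actions to equivalences of the embeddings, once the fixed-point sets are identified, is by comparison formal.
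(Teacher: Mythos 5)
The paper gives no proof of this theorem: it is a survey statement quoted from \cite{Kutzschebauch-Lodin}, and the surrounding text indicates exactly the route you describe, namely applying the parameter-holomorphic linearization counterexample construction of \cite{DK1} to a family of embeddings of $\C^l$ into $\C^n$ that contains the standard embedding as its member over $w=0$, so that non-equivalence of embeddings forces non-isomorphism of the induced $\C^*$-actions while the standard member yields the linear action $\theta_0$. Your proposal therefore matches the intended argument; the only caveat is that the entire technical burden sits in the deferred construction of the one-parameter embedding family with a genuinely standard (not merely standard-equivalent) member at $w=0$, which is precisely what is carried out in \cite{Kutzschebauch-Lodin} and is not reproduced in this paper.
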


\smallskip\noindent
\textbf{Open Problem:}  Suppose $X$ is a Stein manifold with density property and $Y \subset X$ is a closed submanifold. Is there always another proper holomorphic embedding $\varphi : Y \hookrightarrow X$ which is not equivalent to
the inclusion $ i: Y \hookrightarrow X$?
\smallskip

We should remark that an affirmative answer  to this question is stated in \cite{V2}, but the author apparently had another (weaker) notion of equivalence in mind.

Here comes  the essentially complete  list of examples of Stein manifolds known to have the density property:

\smallskip\noindent
\textbf{List of examples of Stein manifolds known to have the density property:}

\smallskip
(1)  $X=G/R$ where $G$ is linear algebraic and $R$ a reductive subgroup has ADP and thus DP (defined on p.2), except for $X=\C$ and $X= (\C^*)^n$.
(this includes all examples known from the work of Anders\'en-Lempert and Varolin and Varolin-Toth and  Kaliman-Kutzschebauch, the final result is proven by Donzelli-Dvorsky-Kaliman \cite{DDK});

(2) The manifolds  $X$ given as a submanifold in $\C^{n+2}$ with coordinates $u\in \C$, $v\in \C$, $z\in \C^n$  by the equation $uv = p(z)$, where the zero fiber of the polynomial $p \in \C[\C^n]$ is smooth (otherwise $X$ is not smooth), have ADP
\cite{KaKu2}.

(3) The only known non-algebraic example with DP are the manifolds  $X$ given as a submanifold in $\C^{n+2}$ with coordinates $u\in \C$, $v\in \C$, $z\in \C^n$  by the equation $uv = f(z)$, where the zero fiber of the holomorphic function  $f \in \O (\C^{n})$ is smooth (otherwise $X$ is not smooth) \cite{KaKu2}. 

(4) Danilov-Gizatullin surfaces have ADP \cite{D}.

A variant of density property for (normal) singular varieties (considering vector fields vanishing on a  subvariety in particular  on the singular locus $Sing(X)$) was introduced in \cite{KLL}. A version of  Anders\'en-Lempert theorem holds in this situation which allows to approximate local movements taking place in the smooth part of $X$ by automorphisms fixing the singular locus.   It is proven in  \cite{KLL} that normal affine toric varieties have this property. Another version of this generalization considering
holomorphic automorphisms of $\C^n$ fixing  a codimension two subvariety can be found in \cite{KaKu1}. For more information on the density property we refer to the overview article \cite{KaKu3}.

\section{Flexibility}
\subsection{Definition and main features}

The notion of flexibility is the most recent among the described properties. It was defined in \cite{AFKKZ}. First the algebraic version: 

\begin{definition}
Let $X$ be a reduced  algebraic variety defined over $\C$ (any algebraically closed field would do). 
We let $\SAut(X)$ denote the subgroup of
$\Aut_{alg} (X)$ generated by all algebraic one-parameter unipotent
subgroups of $\Aut_{alg} (X) $, i.e., algebraic subgroups isomorphic to the
additive group $\G_a$ (usually denoted $\C^+$ in complex analysis). The group $\SAut(X)$ is called the
{\em special automorphism group} of $X$; this is a normal subgroup
of $\Aut_{alg} (X)$.
\end{definition}

\begin{definition}
 We say that a point $x\in X_\reg$ is
{\em algebraically   flexible} if the tangent space $T_x X$ is spanned by the
tangent vectors to the orbits $H.x$ of one-parameter unipotent
subgroups $H\subseteq\Aut_{alg} (X)$. A variety $X$ is called {\em algebraically  flexible} if
every point $x\in X_\reg$ is.\end{definition}
Clearly, $X$ is algebraically flexible if one
point of $X_\reg$ is and the group $\Aut_{alg} (X)$ acts transitively on
$X_{\rm reg}$.

The main feature of algebraic flexibility is the following result from \cite{AFKKZ} (whose proof mainly relies on the Rosenlicht theorem);

\begin{theorem}\label{mthm}
For an  irreducible affine variety $X$ of dimension $\ge 2$,
the following conditions are equivalent.
\begin{enumerate}
\item The group $\SAut (X)$ acts transitively on  $X_\reg$.
\item The group $\SAut (X)$ acts
infinitely transitively on $X_\reg$.
\item  $X$ is an algebraically  flexible variety.
\end{enumerate}
\end{theorem}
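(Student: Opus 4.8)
The plan is to establish the cycle $(2)\Rightarrow(1)\Rightarrow(3)\Rightarrow(2)$. The implication $(2)\Rightarrow(1)$ is immediate: infinite transitivity means $m$-transitivity for every $m\ge 1$, and the case $m=1$ is exactly transitivity. So the genuine content sits in $(1)\Rightarrow(3)$ and in the hard implication $(3)\Rightarrow(2)$, the latter being where Rosenlicht's theorem is consumed.

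For $(1)\Rightarrow(3)$ I would introduce the \emph{flexible locus}
$$X_{\mathrm{flex}}=\bigl\{\,x\in X_\reg : \text{the vectors } \delta(x)\ \text{over all } \G_a\text{-generators } \delta \text{ span } T_xX\,\bigr\}.$$
I'd first record two soft facts. First, $X_{\mathrm{flex}}$ is open: for any finite collection of locally nilpotent vector fields $\delta_1,\dots,\delta_k$ (infinitesimal generators of $\G_a$-subgroups), the locus where $\delta_1(x),\dots,\delta_k(x)$ span $T_xX$ is open, cut out by non-vanishing of maximal minors, and $X_{\mathrm{flex}}$ is the union of these loci. Second, $X_{\mathrm{flex}}$ is $\SAut(X)$-invariant, since any $g\in\SAut(X)$ conjugates unipotent subgroups to unipotent subgroups, so $dg_x$ carries the span of unipotent directions at $x$ isomorphically onto the corresponding span at $gx$. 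Granting these, transitivity of $\SAut(X)$ reduces the implication to showing $X_{\mathrm{flex}}\ne\emptyset$. For that I'd argue that a word map $\Phi(t_1,\dots,t_N)=\exp(t_1\delta_1)\cdots\exp(t_N\delta_N)\cdot x$ must be dominant for some choice of generators: were the maximal image dimension $d_{\max}<\dim X$, then every $\G_a$-generator would be tangent to the closure $Y$ of a maximal image, forcing $\SAut(X)$ to preserve $Y$ and contradicting transitivity. At a parameter value $t^0$ where $d\Phi$ has full rank, each partial $\partial_j\Phi|_{t^0}$ is $dg_j$ applied to $\delta_j$ evaluated at an intermediate point, hence lies in the flexible subspace at the image point $y$; since these partials span $T_yX$, we get $y\in X_{\mathrm{flex}}$.

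The difficult implication is $(3)\Rightarrow(2)$. The scheme I'd follow is to run an open-orbit argument on configuration space. Let $X^{(m)}\subseteq X_\reg^m$ be the space of $m$-tuples of pairwise distinct regular points; it is irreducible (open dense in the irreducible $X^m$), and $\SAut(X)$ acts on it diagonally. The goal becomes: the $\SAut(X)$-orbit of any $(x_1,\dots,x_m)$ is open, whence by irreducibility there is a single orbit, i.e.\ $m$-transitivity for every $m$, which is infinite transitivity. To prove openness of the orbit it suffices to produce, for each index $i$ and each tangent direction $v\in T_{x_i}X$, an element of $\SAut(X)$ fixing $x_j$ for $j\ne i$ and moving $x_i$ in the direction $v$ to first order; this makes the tangent space to the orbit equal to $\bigoplus_i T_{x_i}X$. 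The engine behind this \emph{one-point displacement lemma} is the replica construction: if $\delta$ is a locally nilpotent derivation and $f\in\ker\delta$ is an invariant, then $f\delta$ is again locally nilpotent and its flow fixes the zero set of $f$ pointwise. Thus, to fix $x_j$ ($j\ne i$) while moving $x_i$, one wants an invariant $f\in\ker\delta$ vanishing to high order at the $x_j$ but not at $x_i$, together with a $\delta$ whose value $\delta(x_i)$ realizes the prescribed direction; flexibility supplies a family of $\delta$'s spanning $T_{x_i}X$, so combining several replicas realizes an arbitrary $v$.

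The main obstacle, and the precise point where Rosenlicht's theorem is indispensable, is that a separating invariant $f$ exists only if $x_i$ and the $x_j$ lie on distinct $\delta$-orbits: points on a common $\G_a$-orbit cannot be separated by any $\delta$-invariant. Here I would first use flexibility to choose the generators so that the finitely many points lie in pairwise distinct \emph{generic} orbits of the chosen $\G_a$-action, and then invoke Rosenlicht: the $\G_a$-action on the irreducible variety $X$ admits, on a dense invariant open subset, a geometric quotient whose function field is the field of invariants, so rational invariants separate generic orbits. Clearing denominators—arranged, themselves being invariants, not to vanish at the relevant points—yields a genuine polynomial invariant $f\in\ker\delta$ separating $x_i$ from the remaining points; raising $f$ to a high power forces the flow of $f\delta$ to fix the other points to arbitrary order while keeping $\delta(x_i)\ne 0$. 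I expect the most delicate bookkeeping to be the simultaneous, compatible choice of separating invariants for all the points at once, and the verification that the combined replicas indeed span the full displacement space $\bigoplus_i T_{x_i}X$; this is exactly where hypothesis $(3)$ is used in an essential way.
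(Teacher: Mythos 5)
The survey does not actually prove Theorem \ref{mthm}; it quotes it from \cite{AFKKZ} with the single remark that the proof ``mainly relies on the Rosenlicht theorem'', so there is no in-paper argument to compare against line by line. Measured against the proof in \cite{AFKKZ}, your architecture is the right one: $(2)\Rightarrow(1)$ trivially, $(1)\Rightarrow(3)$ via the open, $\SAut(X)$-invariant flexible locus together with a dominant word map $\exp(t_1\delta_1)\cdots\exp(t_N\delta_N)\cdot x$ (your argument that a maximal-dimensional image closure would be $\SAut(X)$-invariant is the standard way to get dominance from transitivity), and $(3)\Rightarrow(2)$ via replicas $f\delta$, $f\in\ker\delta$, with Rosenlicht supplying the separating invariants. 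One cosmetic caveat: since $\SAut(X)$ is not an algebraic group, ``the tangent space to the orbit'' is not literally defined; the clean statement is that finitely many replicas, acting diagonally on the configuration space, have values spanning $\bigoplus_i T_{x_i}X$, so the associated word map is dominant and any two orbits meet.

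The genuine gap is that the entire content of $(3)\Rightarrow(2)$ is concentrated in the step you defer. Three points need to be supplied. First, the hypothesis $\dim X\ge 2$ is never used in your sketch, yet it is exactly what makes the replica machinery nonempty: a nontrivial locally nilpotent $\delta$ has generic orbits of dimension one, so $\ker\delta$ contains nonconstant functions only when $\dim X\ge 2$; in dimension one there are no separating invariants at all. Second, Rosenlicht separates orbits only on a dense open invariant subset, and the given points need not lie there, nor on pairwise distinct orbits of any single $\delta$; the resolution in \cite{AFKKZ} is an order-of-quantifiers device --- one first establishes $1$-transitivity (via $(3)\Rightarrow(1)$, which your flexible-locus argument gives) and uses it to move the finite configuration into general position simultaneously with respect to the finitely many fields needed to span all the $T_{x_i}X$, before choosing the invariants. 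Saying ``use flexibility to choose the generators so that the points lie in pairwise distinct generic orbits'' names the difficulty rather than resolving it: for a fixed $i$ one needs several $\delta$'s to realize an arbitrary $v\in T_{x_i}X$, and for each of them, separately, a regular invariant vanishing to high order at every $x_j$, $j\ne i$, nonzero at $x_i$, with denominators (themselves invariant) controlled at all the points at once. Third, after all replicas are combined one must verify that the resulting values really span $\bigoplus_i T_{x_i}X$ rather than a proper subspace --- this is where flexibility is consumed quantitatively, and it is not automatic from the one-point displacement statement. None of these is a wrong turn --- they are the theorem --- but as written the proposal is a faithful roadmap of the \cite{AFKKZ} proof rather than a proof.
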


 The paper \cite{AFKKZ} also contains versions of simultaneous transitivity (where the space $X_\reg$ is stratified by orbits of  $\SAut (X)$) and versions with jet-interpolation. Moreover, it was recently remarked 
 that the theorem holds for quasi-affine varieties, see Theorem 1.11. in \cite{FKZ}.
 
The holomorphic version of this notion is much less explored, it is obviously implied by the algebraic version in case $X$ is an algebraic variety.

\begin{definition} We say that a point $x\in X_\reg$ is
{\em holomorphically   flexible} if the tangent space $T_x X$ is spanned by the
tangent vectors of completely integrable holomorphic vector fields, i.e. holomorphic  one-parameter 
subgroups in $\Aut_{hol} (X)$. 
A complex manifold  $X$ is called {\em holomorphically  flexible} if
every point $x\in X_\reg$ is.\end{definition}
Clearly, $X$ is holomorphically flexible if one
point of $X_\reg$ is and the group $\Aut_{hol} (X)$ acts transitively on
$X_{\rm reg}$.

In the holomorphic category it is still open whether an analogue  of Theorem \ref{mthm} holds.

\smallskip\noindent
\textbf{Open Problem:} Are the three equivalences from Theorem  \ref{mthm}  true for an irreducible   Stein  space $X$?  More precisely, if an irreducible  Stein  space $X$ is holomorphically flexible, does the holomorphic automorphism group 
$\Aut_{hol} (X)$ act infinitely  transitively on
$X_{\rm reg}$? 
\smallskip

It is clear that holomorphic flexibility of $X$ implies that $\Aut_{hol} (X)$ acts transitively on
$X_{\rm reg}$, i.e., the implication $ (3) \Rightarrow (1)$ is true.
Indeed, let $\theta_i, i=1, 2, \ldots, n$ be completely integrable holomorphic vector fields which span the tangent space $T_x X$ at some point $x \in X_{\rm reg}$, where $n= dim X$. If $\psi^i : \C \times X \to X, \quad (t, x)  \mapsto \psi^i_ t (x)$ denote the corresponding one-parameter subgroups, then
the map $\C^n \to X, \quad (t_1, t_2, \ldots, t_n) \mapsto \psi_{t_n}^n \circ \psi_{t_{n-1}}^{n-1} \circ \cdots \circ \psi_{t_1}^1 (x)$ is of full rank at $t =0$ and thus by the Inverse Function Theorem a local biholomorphisms from a neighborhood of $0$ to a neighborhood of $x$. Thus the $\Aut_{hol} (X)$-orbit
through any point of $X_{\rm reg}$ is open. If all orbits are open, each orbit is  also closed, being the complement of all other orbits. Since $X_{\rm reg}$ is connected, this implies that it consists of one orbit.

The inverse implication  $ (1) \Rightarrow (3)$ is also true. For the proof we appeal to the Hermann-Nagano Theorem which states that if $\g$ is a Lie algebra of holomorphic vector fields on a manifold
$X$, then  the orbit $R_\g  (x) $ (which is the union of all points $z$ over any collection of finitely many fields $v_1, \ldots v_N \in \g $  and over all times $(t_1, \ldots, t_N)$ for  which the expression $z =\psi_{t_N}^N \circ \psi_{t_{N-1}}^{N-1} \circ \cdots \circ \psi_{t_1}^1 (x)$ is defined)
  is a locally closed submanifold and its tangent space at any point $y \in R_\g  (x)$ is $T_y R_\g (x) = span_ { v \in \g} {v (y)}$. We consider the Lie algebra $\g$
generated by completely integrable holomorphic vector fields. Since by the assumption the orbit is $X_{\rm reg}$ we conclude that Lie combinations of completely integrable holomorphic vector fields span the tangent space at each point in  $X_{\rm reg}$. Now suppose at some point $x_0$ the completely integrable fields do not generate $T_{x_0} X_{\rm reg}$, i.e., there is a proper linear subspace $W$ of $T_{x_0} X_{\rm reg}$, such that $v(x_0) \in W$ for all completely integrable  holomorphic fields $v$. Any Lie combination of completely integrable  holomorphic fields is a limit (in the compact open topology) of sums of completely integrable  holomorphic fields due to the formula $\{ v, w\} = lim_{t \to 0} \frac {\phi_t^* (w) - w} t$ for the Lie bracket ($\phi_t^* (w)$ is a completely integrable field pulled back by an automorphism, thus completely integrable!). Therefore all Lie combinations of completely integrable fields evaluated  at  $x_0$ are contained in $W \subset T_{x_0} X_{\rm reg}$, a contradiction.

In order to prove the  remaining implication $ (3) \Rightarrow (2)$ one would like  to find suitable  functions $f \in {\rm Ker }  \theta$ for a completely integrable holomorphic vector field $\theta$, vanishing at one point and not vanishing at some other point of $X$.  In general these functions may not exist, an orbit of $\theta$ can be dense in $X$.

At this point it is worth mentioning that for a Stein manifold DP implies all three conditions from Theorem \ref{mthm}. For flexibility this is lemma \ref{span} below,  infinite transitivity
(with jet-interpolation) is proved by Varolin in \cite{V2}.
%
%
%
%
%

Also the generalized form of DP  for Stein spaces defined in \cite{KLL} implies all three conditions from Theorem \ref{mthm}. 

\subsection{Examples}

Examples of algebraically flexible varieties are homogeneous spaces of semisimple Lie groups (or extensions of semisimple Lie groups by unipotent radicals), toric varieties without non-constant invertible regular functions, cones over flag varieties and cones over Del Pezzo surfaces of degree at least $4$, normal hypersurfaces of the form
$uv = p(\bar x)$ in $\C_{u, v, \bar x}^{n+2}$. Moreover, algebraic subsets of codimension at least $2$ can be removed as recently shown by Flenner, Kaliman and Zaidenberg in \cite{FKZ}

\begin{theorem}  Let $ X$ be a smooth quasi-affine variety of dimension $\ge 2$ and $Y \subset  X$ a closed subscheme of codimension $\ge  2$. If $X$ is flexible then so is $X\setminus Y$.

\end{theorem}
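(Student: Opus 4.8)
The plan is to reduce the claim to the spanning condition in the definition of flexibility. First I would note that $X\setminus Y$ is again a smooth quasi-affine variety of dimension $\ge 2$, so that its regular locus is all of $X\setminus Y$; hence, by the very definition of algebraic flexibility (and consistently with the quasi-affine version of Theorem \ref{mthm}), it suffices to show that for every $x\in X\setminus Y$ the tangent space $T_x(X\setminus Y)=T_xX$ is spanned by the tangent vectors at $x$ to orbits of one-parameter unipotent subgroups of $\Aut_{alg}(X\setminus Y)$. I would manufacture such subgroups as restrictions to $X\setminus Y$ of $\G_a$-actions on $X$ that fix $Y$ pointwise, letting the flexibility of $X$ supply enough actions and the replica trick force them to preserve $X\setminus Y$.

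Concretely, I would fix $x\in X\setminus Y$. Since $X$ is flexible, there are complete algebraic vector fields $\delta_1,\dots,\delta_n$ on $X$, each the velocity field of an algebraic $\G_a$-action (equivalently a locally nilpotent derivation of $\O(X)$), whose values $\delta_1(x),\dots,\delta_n(x)$ span $T_xX$. For each $i$ I would then seek a regular function $h_i$ in the kernel of $\delta_i$ with $h_i|_Y\equiv 0$ and $h_i(x)\neq 0$. Because $h_i$ is $\delta_i$-invariant, the replica $h_i\delta_i$ is again locally nilpotent (one checks $(h_i\delta_i)^k=h_i^{\,k}\delta_i^{\,k}$), hence integrates to a $\G_a$-action on $X$; as $h_i\delta_i$ vanishes identically along $Y$, every point of $Y$ is a fixed point, so $X\setminus Y$ is invariant and the restricted action is a one-parameter unipotent subgroup of $\Aut_{alg}(X\setminus Y)$. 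Since $(h_i\delta_i)(x)=h_i(x)\,\delta_i(x)$ is a nonzero multiple of $\delta_i(x)$, the vectors $(h_i\delta_i)(x)$ still span $T_xX$, which is exactly what flexibility of $X\setminus Y$ at $x$ demands.

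The whole argument hinges on producing, for enough directions at $x$ to span, an invariant function $h$ vanishing on $Y$ but not at $x$, and this is where I expect the real difficulty to lie and where the hypothesis $\codim Y\ge 2$ is indispensable. For a single field $\delta$ no such $h$ can exist if the orbit of $x$ meets $Y$, since invariant functions are constant along orbits; so I would first use the abundant supply of $\G_a$-actions guaranteed by flexibility to choose $\delta$ with $\delta(x)$ in the prescribed direction and with the orbit of $x$—indeed a whole Zariski-open family of orbits—avoiding $Y$. This is plausible precisely because $\dim Y\le n-2$, so that the saturation of $Y$ by the one-dimensional orbits of $\delta$ still has codimension $\ge 1$ and $x$ can be kept off its closure. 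I would then descend to the geometric quotient by the $\G_a$-action, where the image of $Y$ is a proper closed subset missing the image of $x$, separate the two there by a regular function, and pull it back; the remaining technical point—also powered by $\codim Y\ge 2$ through the algebraic Hartogs identity $\O(X\setminus Y)=\O(X)$—is to guarantee that the resulting invariant is globally regular and that directions obtained this way genuinely span $T_xX$.
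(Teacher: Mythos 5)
The paper does not actually prove this theorem: it is quoted from Flenner--Kaliman--Zaidenberg \cite{FKZ}, where the proof occupies most of a substantial article. Your skeleton --- produce, for each direction at $x$, a locally nilpotent derivation $\delta$ and an invariant $h\in\ker\delta$ with $h|_Y\equiv 0$, $h(x)\neq 0$, and pass to the replica $h\delta$, which integrates to a $\G_a$-action fixing $Y$ pointwise and hence restricting to $X\setminus Y$ --- is indeed the right reduction, and your bookkeeping around it (the identity $(h\delta)^k=h^k\delta^k$, invariance of $X\setminus Y$, preservation of spanning after scaling by $h_i(x)\neq 0$, the Hartogs identity $\O(X\setminus Y)=\O(X)$ in codimension $\ge 2$) is correct.

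The genuine gap is the step you yourself flag: the existence of the invariant functions $h_i$. Your proposed mechanism --- ``descend to the geometric quotient, separate the image of $Y$ from the image of $x$ by a regular function, and pull back'' --- does not work as stated. A $\G_a$-action on a quasi-affine variety in general admits no geometric quotient; at best one has the map to $\operatorname{Spec}(\ker\delta)$ (and $\ker\delta$ need not even be finitely generated in the quasi-affine setting). That map has degenerate fibres, so the common zero locus of $\ker\delta\cap I(Y)$ can be strictly larger than the saturation $\G_a\cdot Y$ and may well contain $x$ even when the orbit of $x$ misses $Y$; moreover the image of $Y$ need not be closed in any partial quotient, and invariant functions need not separate disjoint invariant closed sets. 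The dimension count $\dim(\G_a\cdot Y)\le \dim Y+1\le n-1$ shows only that $x$ avoids the saturation for generic $\delta$, not that an invariant function witnesses this. Overcoming these obstructions --- choosing the fields $\delta$ carefully enough that suitable invariants exist, and showing that \emph{enough} such fields remain to span $T_xX$ --- is precisely the technical content of \cite{FKZ}, so what you have written is a correct reduction of the theorem to its hardest step rather than a proof.
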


\section{Oka-Forstneri\v c manifolds}

\subsection{Historical introduction to Oka theory and motivational examples}
The notion of Oka-Forstneri\v c manifolds is quite new (it was introduced by Forstneri\v c in \cite{For1}, who called them Oka manifolds following a suggestion  of L\' arusson who underlined the importance of such a notion already in  \cite{La}) but the development merging into this important notion, called Oka theory, has a long history.  It started with Oka's theorem from 1939 that the second (multiplicative) Cousin problem on a domain of holomorphy  is solvable with holomorphic functions if it is solvable with continuous functions. This  imples that a holomorphic line bundle on such a domain is is holomorphically trivial if it is topologically trivial.

Let us recall how the generalizations of the classical one variable results  of Mittag-Leffler (construct meromorphic functions with prescribed main parts) and Weierstrass (construct meromorphic functions with prescribed zeros and poles)
are generalized to several complex variables.

Let us recall the first (additive) Cousin problem  which arrises from the Mittag-Leffler problem, generalizing the famous Mittag-Leffler theorem from one variable to several variables: Given data  $ \{(U_i, m_i)\} $, where ${U_i}$ is an open cover of a complex space $X$ and $m_i \in \M (U_i)$ is a meromorphic function on $U_i$ such that 
every difference $f_{ij} = m_i\vert_{U_{ij}} - m_j\vert_{U_{ij}}$ is holomorphic on $U_{ij} = U_i \cap U_j$, find a global meromorphic function $m \in \M (X)$ on $X$ such that $m\vert_{U_i} - m_i $ is holomorphic on $U_i$ for all $i$.

For solving  this Mittag-Leffler problem  one first solves  the associated additive Cousin  problem, defined as follows: The collection $f_{ij} \in \O (U_{ij})$ defines a 1-cocycle on the cover ${U_i}$ with values in the sheaf $\O$ of holomorphic functions, meaning that for each triple $i, j, k$ of
indexes we have 
$$ f_{ij} + f_{jk}+ f_{ki} = 0 \quad {\rm on} \quad  U_{ijk} = U_i \cap U_j \cap U_k .$$

Given such a 1-cocycle $\{ f_{ij} \} $, the Cousin I problem asks for a collection of holomorphic functions $f_j \in \O (U_j)$ (a 0-cochain) such that 

$$ f_i - f_j = f_{ij} \quad {\rm on} \quad U_{ij}.$$ 

One expresses this by saying the cocycle splits or it is a 1-coboundary.
From the solution to the additive Cousin problem one obtains by setting  $m\vert_{U_i} = m_i - f_i$  a well-defined (since $m_i -m_j = f_{ij} = f_i -f_j \quad {\rm on} \quad U_{ij}$) global meromorphic function $m \in \M (X)$ solving the Mittag-Leffler problem.

The vanishing of the first Cech cohomology group $H^1 (X, \O)$  with coefficients in the sheaf $\O$ means that every 1-cocycle splits on a refinement of the covering. In other words  $H^1 (X, \O) = 0$ implies  that every 1-cocycle becomes a 1-coboundary on a refinement, so every Mittag-Leffler problem 
is solvable, in particular by Cartan's Theorem B this is true for any Stein manifold.

The second (multiplicative) Cousin Problem arises from the problem of finding meromorphic functions with prescribed zeros and poles, solved by Weierstrass in one variable. Given data  $ \{(U_i, m_i)\} $, where ${U_i}$ is an open cover of a complex space $X$ and $m_i \in \M^* (U_i)$ is an invertible (i.e., not vanishing identically on any connected component) meromorphic function on $U_i$ such that for any pair of indexes the quotient $f_{ij} := g_i g_j^{-1}$ is a nowhere vanishing holomorphic function $f_{ij} \in \O^* (U_{ij})$. Our data defines a divisor $D$ on $X$ and the problem is to find a 
global meromorphic function $m \in \M (X)$ defining this divisor, meaning, such a function  that $m m_i^{-1}$ is a nowhere vanishing holomorphic function on $U_i$  for every $i$. A solution is obtained by solving the second Cousin problem:  Given a collection ${f_{ij}}$ of nowhere vanishing
holomorphic functions $f_{ij } : U_{ij} \to \C^*$ satisfying the 1-cocycle condition

$$f_{ii} = 1 \quad f_{ij} f_{ji}  = 1 \quad f_{ij} f_{jk} f_{ki}  = 1$$

\noindent
on $U_i$, $U_{ij}$,  $U_{ijk}$ respectively, find nowhere vanishing holomorphic functions $f_j : U_j \to \C^*$ such that 

$$ f_i = f_{ij} f_j \quad \mathrm{ on} \ \ U_{ij} $$.

If such $f_i$ exist then $ g_i f_i^{-1}  = g_j f_j^{-1} $ on $U_{ij}$ which defines a solution, a meromorphic function  $m \in \M (X)$ representing our divisor.

The following cohomological  formulation and proof of Oka's Theorem are standard, see e.g.  \cite{For} Theorem 5.2.2..

\begin{theorem} If  X is a complex space satisfying $H^1 (X,\O) = 0$ then the homomorphism $H^1 (X,\O^*) \to H^1 (X,\C^*)$ induced by the sheaf inclusion $\O^* \hookrightarrow \C^*$ is injective. In particular
if a multiplicative Cousin problem is solvable by continuous functions, then it is solvable by holomorphic functions. If in addition we have $H^2 (X, \O) = 0 $ then the above map is an isomorphism.

\end{theorem}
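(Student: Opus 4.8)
\emph{The plan is to} deduce everything from the two exponential sheaf sequences together with naturality of the connecting homomorphism. Write $\mathcal{C}$ for the sheaf of germs of continuous $\C$-valued functions on $X$, and let $\C^*$ (resp.\ $\O^*$) denote the sheaf of germs of nowhere-vanishing continuous (resp.\ holomorphic) functions, so that ``solvable by continuous functions'' means precisely that the cocycle defining the Cousin problem represents the trivial class in $H^1(X,\C^*)$. Since every nowhere-vanishing holomorphic or continuous function admits a logarithm on a small contractible neighborhood, the map $\exp(2\pi i\,\cdot)$ yields two short exact sequences of sheaves
$$0 \to \Z \to \O \to \O^* \to 0, \qquad 0 \to \Z \to \mathcal{C} \to \C^* \to 0,$$
and the inclusions $\O \hookrightarrow \mathcal{C}$ and $\O^* \hookrightarrow \C^*$, together with the identity on $\Z$, constitute a morphism from the first sequence to the second.

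First I would pass to the associated long exact cohomology sequences and isolate the two connecting (Bockstein) homomorphisms $\delta_{\O}\colon H^1(X,\O^*)\to H^2(X,\Z)$ and $\delta_{\mathcal{C}}\colon H^1(X,\C^*)\to H^2(X,\Z)$. The crucial input is that $\mathcal{C}$ is a fine sheaf: on the paracompact space $X$ it admits continuous partitions of unity, so $H^q(X,\mathcal{C})=0$ for all $q\ge 1$. Feeding this vanishing into the long exact sequence of the continuous row forces $\delta_{\mathcal{C}}$ to be an isomorphism. By naturality of the connecting homomorphism applied to the morphism of sequences above we get $\delta_{\mathcal{C}}\circ\iota_* = \delta_{\O}$, where $\iota_*\colon H^1(X,\O^*)\to H^1(X,\C^*)$ is the map of the theorem; since $\delta_{\mathcal{C}}$ is invertible, $\iota_* = \delta_{\mathcal{C}}^{-1}\circ\delta_{\O}$ has exactly the same kernel and image as $\delta_{\O}$.

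It then remains to read off the behaviour of $\delta_{\O}$ from the holomorphic row
$$H^1(X,\O)\to H^1(X,\O^*)\xrightarrow{\ \delta_{\O}\ } H^2(X,\Z)\to H^2(X,\O).$$
If $H^1(X,\O)=0$, the incoming arrow $H^1(X,\O)\to H^1(X,\O^*)$ vanishes, so $\delta_{\O}$ is injective and hence so is $\iota_*$; unwinding the identification, this says precisely that a multiplicative Cousin problem with trivial image in $H^1(X,\C^*)$ (solvable continuously) already has trivial class in $H^1(X,\O^*)$ (solvable holomorphically). If in addition $H^2(X,\O)=0$, then the outgoing arrow $H^2(X,\Z)\to H^2(X,\O)$ vanishes, so $\delta_{\O}$ is also surjective, hence bijective, and therefore $\iota_*$ is an isomorphism.

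I expect the one genuine subtlety to be the fineness (softness) of $\mathcal{C}$ and the resulting vanishing $H^{\ge 1}(X,\mathcal{C})=0$: this is what collapses the topological side of the diagram onto the single group $H^2(X,\Z)$ and makes $\delta_{\mathcal{C}}$ an isomorphism, and it is the step that uses paracompactness of $X$ in an essential way. A secondary point to check is that both exponential maps are surjective as morphisms of sheaves, i.e.\ that local logarithms exist, which is immediate on contractible neighborhoods. Beyond these, the argument is the standard diagram chase through naturality of $\delta$.
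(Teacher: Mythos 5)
Your proposal is correct and follows essentially the same route as the paper: the morphism of exponential sheaf sequences, the vanishing $H^{q}(X,\mathcal{C})=0$ for $q\ge 1$ by fineness (partitions of unity) to identify $H^1(X,\C^*)\cong H^2(X,\Z)$, and then reading injectivity resp.\ bijectivity of the first Chern class map off the holomorphic long exact sequence from $H^1(X,\O)=0$ resp.\ $H^2(X,\O)=0$. Your explicit use of naturality of the connecting homomorphism is just a slightly more careful phrasing of the diagram chase the paper leaves implicit.
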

\begin{proof}
Consider the exponential sheaf sequence (where $\sigma (f) = e^{2 \pi i f}$).

\[ \xymatrix{
0                                \ar[r] \ &   \Z  \ar[r] \ar[d]^{id} & \O \ar[r]^\sigma \ar[d]  &  \O^* \ar[r]  \ar[d] & 1 \\
0                                \ar[r] &\ Z                    \ar[r]                & \C  \ar[r]^\sigma          & \C^*  \ar[r]          & 1 } \]

Since due to partition of unity $H^1 (X, \C) = H^2 (X, \C) = 0$ the relevant portion of long exact cohomology sequence is:

\[ \xymatrix{
H^1 (X, \Z)      \ar[r]  &  H^1 (X \O)   \ar[r] \ar[d] & H^1 (X, \O^*) \ar[r]^{c_1} \ar[d]  &  H^2 (X, \Z)  \ar[r]  \ar@{=}[d] & H^2 (X, \O) \ar[d] \\
                              &  0                 \ar[r]          & H^1 (X, \C^*)  \ar[r]^{c_1}          &  H^2 (X, \Z)  \ar[r]          & 0 } \]

The map in the bottom row is an isomorphism  $H^1 (X, \C^*)  \cong H^2 (X, \Z)$. If $H^1  ( X, \O) = 0$ the (1-st Chern class) map $c_1$ in the first row is injective
$ 0 \to H^1 (X, \O^*) \xrightarrow{c_1}     H^2 (X, \Z) \cong H^1 (X, \C^*) $. If in addition  $H^2 (X, \O) = 0$ this map is an isomorphism.

\end{proof}

By Oka's theorem on a complex space with $H^1 (X, \O) = H^2 (X, \O) = 0$ (by Theorem B this holds in particular on a Stein space) the natural map from equivalence classes of holomorphic line bundles into equivalence classes of continuous (complex) line bundles is an isomorphism.
For higher rank vector bundles this   cohomological proof fails due to non commutativity  of the relevant cohomology groups. Nevertheless, Grauert was able to prove the corresponding statement in higher dimensions. The following theorem is the holomorphic counterpart of Quillen's and Suslin's result that projective modules over affine space are free.

\begin{theorem} For a Stein space $X$
the natural map $\rm Vect_{hol}^r (X) \to \rm Vect_{top}^r (X)$ of equivalence classes of rank $r$ complex vector bundles is a bijection for every $r \in \N$.
\end{theorem}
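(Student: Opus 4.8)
The plan is to translate the statement into the language of nonabelian cohomology and then prove the Oka--Grauert principle for the structure group $GL_r(\C)$. A rank $r$ holomorphic (respectively continuous) vector bundle on $X$ is determined, up to isomorphism, by a $1$-cocycle of transition functions with values in $GL_r(\C)$ that are holomorphic (respectively continuous) on the overlaps; thus $\mathrm{Vect}^r_{hol}(X) = H^1(X,\O(GL_r(\C)))$ and $\mathrm{Vect}^r_{top}(X) = H^1(X,\mathcal{C}(GL_r(\C)))$, where $\O(GL_r(\C))$ and $\mathcal{C}(GL_r(\C))$ denote the sheaves of holomorphic and continuous $GL_r(\C)$-valued maps, and the natural map is the one induced by the sheaf inclusion $\O(GL_r(\C))\hookrightarrow \mathcal{C}(GL_r(\C))$. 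So the goal becomes: this inclusion induces a bijection of $H^1$. As already noted for the higher-rank case, the exponential-sequence argument used for line bundles is unavailable here because these cohomology sets are nonabelian pointed sets; this is the crux of the difficulty.

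Second, I would set up Grauert's homotopy method. Exhaust $X$ by an increasing sequence of compact, $\O(X)$-convex (holomorphically convex) sets $K_1 \subset K_2 \subset \cdots$ with $\bigcup K_j = X$, obtained as sublevel sets of a strictly plurisubharmonic exhaustion function. The idea is to build the desired holomorphic cocycle (for surjectivity) or the holomorphic isomorphism (for injectivity) inductively over the $K_j$, and to handle surjectivity and injectivity together by proving a single parametric statement about homotopies of cocycles. Passing from $K_j$ to $K_{j+1}$ decomposes into two elementary moves: the \emph{noncritical} case, where one extends across a convex bump over a Cartan pair $(A,B)$, and the \emph{critical} case, where one crosses a critical value of the exhaustion function (attaches a handle). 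The critical case is essentially topological: the change in the bundle is captured by the topological classification, so one produces the needed homotopy continuously and then realizes it holomorphically on the slightly smaller sublevel set.

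Third, the analytic heart is the noncritical (bump) step, which amounts to a nonlinear, multiplicative Cousin problem over a Cartan pair. Concretely, to merge a holomorphic cocycle given on $A$ with one given on $B$ one must write a holomorphic map $C : A\cap B \to GL_r(\C)$, close to the identity, as a product $C = C_A\, C_B^{-1}$ with $C_A$ holomorphic on $A$ and $C_B$ holomorphic on $B$ (a holomorphic Cartan splitting). I would solve this by linearization and iteration: take $\log C$ to obtain a $\mathfrak{gl}_r(\C)$-valued map on the Stein overlap $A\cap B$, solve the resulting additive Cousin problem there using $H^1(A\cap B,\O)=0$ (Cartan's Theorem B, equivalently the solvability of $\bar\partial$), exponentiate to get an approximate splitting, and correct by a rapidly convergent Newton-type scheme whose errors are quadratically small. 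The complex Lie group structure of $GL_r(\C)$ --- invertibility of the multiplication near the identity together with the exponential map --- is exactly what makes the passage from the additive (Lie-algebra) solution to the multiplicative (group) solution possible and the iteration convergent.

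The hard part will be the noncommutativity in this splitting step: unlike the abelian line-bundle case, one cannot obtain $C = C_A C_B^{-1}$ from a single application of $\bar\partial$, and one must run the iterative linearization while keeping careful norm estimates guaranteeing both that each successive correction is smaller and that all maps remain in the region where the logarithm and the splitting are defined. The secondary difficulty is bookkeeping the homotopy class across the critical levels, so that the final holomorphic object lies in the prescribed topological class; here one feeds in the topological classification and invokes the parametric version of the bump lemma to carry homotopies of cocycles along the exhaustion. Combining the two moves over the whole exhaustion and passing to the limit with the usual convergence estimates then yields both surjectivity and injectivity of the natural map.
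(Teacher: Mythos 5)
Your proposal is correct in outline, but it takes a genuinely different route from the paper. The paper does not prove this statement directly at all: it deduces it from Grauert's Oka principle for sections of fibre bundles with complex homogeneous fibre (Theorem~\ref{GOP}), applied to the bundle of fibrewise isomorphisms between two rank $r$ bundles, whose fibre $GL_r(\C)$ is homogeneous under the transitive action of the structure group; the weak homotopy equivalence $\Gamma_{hol}\hookrightarrow\Gamma_{cont}$ then deforms a continuous isomorphism to a holomorphic one (injectivity) and produces holomorphic bundles in every topological class (surjectivity). You instead reprove the underlying principle for the structure group $GL_r(\C)$ from scratch: nonabelian \v Cech $H^1$ with values in $\O(GL_r(\C))$ versus $\mathcal C(GL_r(\C))$, Grauert's induction over an exhaustion by $\O(X)$-convex compacts, the noncritical bump step reduced to a multiplicative Cartan splitting $C=C_AC_B^{-1}$ solved by linearization and a Newton-type iteration, and a topological treatment of the critical levels, with a parametric version to carry homotopies of cocycles and obtain injectivity. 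This is essentially Grauert's original argument in its modern Henkin--Leiterer/Forstneri\v c form, and it is a valid, self-contained alternative; it buys you the actual mechanism behind the theorem, at the price of the analytic bookkeeping the paper sidesteps by quoting the section-level result. One technical point to repair in your sketch: the additive splitting needed in the bump step does not follow from $H^1(A\cap B,\O)=0$. What is required is Cartan's splitting lemma \emph{with bounds} on the Cartan pair $(A,B)$: a bounded linear operator writing a holomorphic function on a neighborhood of $A\cap B$ as a difference $a-b$ with $a$ holomorphic near $A$, $b$ holomorphic near $B$, and sup-norm estimates. Without those estimates the quadratically convergent iteration does not close, since you must guarantee that all successive corrections stay in the neighborhood of the identity where $\log$ and the group splitting are defined.
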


This theorem follows from the following result, named  Grauert's Oka-principle by H.Cartan, obtained by Grauert \cite{Grauert}, Grauert and Kerner \cite{GK} and Ramspott \cite{R} (see Theorem 5.3.2. in \cite{For}).

\begin{theorem} \label{GOP}
If $X$ is a Stein space and $\pi: Z \to X$ is a holomorphic fibre bundle with a complex homogeneous fibre whose structure group is a complex Lie group acting transitively on the fibre, then the inclusion 
$\Gamma_{hol} (X, Z) \hookrightarrow \Gamma_{cont} (X, Z)$ of the space of global holomorphic sections into the space of global continuous sections is a weak homotopy equivalence. In particular every continuous section
is homotopic to a holomorphic section. 
\end{theorem}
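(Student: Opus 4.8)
The plan is to reduce the statement to the Gromov-ellipticity of the fibre and then to run the Oka--Grauert patching machinery over the Stein base. The first step, which is the conceptual core, is to show that the fibre $F = G/H$ is \emph{elliptic}, i.e.\ that it admits a dominating holomorphic spray. I would build such a spray directly from the transitive action of the complex Lie group $G$: picking a basis $v_1,\dots,v_N$ of the Lie algebra $\g$ and writing $\exp(t v_i)$ for the corresponding one-parameter subgroups, set
\[ s\colon F\times\C^N\to F,\qquad s\bigl(x,(t_1,\dots,t_N)\bigr)=\exp(t_1v_1)\cdots\exp(t_Nv_N)\cdot x . \]
Because $G$ acts transitively, the vector fields induced by the $\exp(t v_i)$ span $T_xF$ at every point $x$, so the differential of $t\mapsto s(x,t)$ at $t=0$ is surjective onto $T_xF$; this is exactly the domination condition. (This is the same mechanism that produces holomorphic flexibility --- homogeneity under a complex Lie group is simply a very strong form of it.) The spray is moreover fibrewise compatible with the bundle structure, since the structure group acts by holomorphic bundle automorphisms, so it globalizes to a fibre-dominating spray on $Z\to X$.

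The second step is to convert this ellipticity into the Oka principle for sections. I would exhaust the Stein space $X$ by a sequence of compact strongly pseudoconvex sublevel sets of a strictly plurisubharmonic exhaustion and extend a given holomorphic section across one sublevel set at a time, distinguishing the noncritical case (where the topology does not change) from the critical case (where a handle is attached). On a small piece biholomorphic to a polynomially convex set in some $\C^n$ the bundle is trivial and the fibre is elliptic, so the spray lets one approximate and homotope continuous local sections by holomorphic ones. The heart of the argument is the gluing over a Cartan pair $(A,B)$: given holomorphic sections over $A$ and over $B$ that nearly agree on the overlap $A\cap B$, one uses the dominating spray to encode their difference as a holomorphic map $c$ into the $\C^N$-parameter close to $0$, splits $c=a-b$ additively by the linear Cousin~I / Cartan decomposition available on the Cartan pair, and then corrects the two sections by $s(\,\cdot\,,a)$ and $s(\,\cdot\,,b)$ respectively so that they now match and define a single holomorphic section over $A\cup B$. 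Iterating this noncritical gluing together with the handle step across the whole exhaustion yields that every continuous section is homotopic to a holomorphic one.

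Finally, to obtain a weak homotopy equivalence rather than mere surjectivity on $\pi_0$, I would carry out the entire scheme \emph{parametrically}: with the section depending continuously on a parameter ranging over a compact polyhedron $P$, and already holomorphic over a subpolyhedron $P_0\subset P$, the same spray-splitting and handle arguments produce a homotopy to a holomorphic family relative to $P_0$, with control in the compact-open topology. Applying this with $(P,P_0)=(D^k,\partial D^k)$ for all $k$ gives that $\Gamma_{hol}(X,Z)\hookrightarrow\Gamma_{cont}(X,Z)$ is a bijection on all homotopy groups, hence a weak homotopy equivalence. I expect the main obstacle to be precisely the gluing step: solving this nonlinear Cousin problem via the spray and Cartan's splitting lemma, and organizing the noncritical/critical induction over the Stein exhaustion so that it runs uniformly with a parameter and with the required approximation --- this is the genuinely analytic core, whereas ellipticity of the fibre and the formal bookkeeping of the induction are comparatively soft.
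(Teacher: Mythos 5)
The paper does not actually prove this theorem: it is quoted as a classical result of Grauert, Grauert--Kerner and Ramspott, with a pointer to Theorem 5.3.2 of \cite{For}, so there is no in-text argument to compare yours against line by line. Your sketch is a correct outline of the \emph{modern} route to this statement (Gromov, Forstneri\v c--Prezelj, as in Chapters 5--6 of \cite{For}): homogeneity of the fibre gives a dominating spray, hence the fibre is elliptic and in particular Oka, and then the induction over a strictly plurisubharmonic exhaustion with Cartan-pair gluing and its parametric version yields the weak homotopy equivalence. This is genuinely different from Grauert's original 1958 proof, which did not isolate the notion of a spray but worked directly with the exponential map of the structure group and the bump/patching method for cocycles with values in the sheaf of sections; what the modern approach buys is exactly the point the surveyed paper emphasizes, namely that only the fibre matters and the structure group becomes irrelevant, so the same machinery covers the much larger class of Oka fibres.

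One technical claim in your write-up deserves correction. The spray $s(x,t)=\exp(t_1v_1)\cdots\exp(t_Nv_N)\cdot x$ on the fibre $F$ is \emph{not} equivariant under the structure group (the group acts by left translations, and your composition of exponentials does not commute with them unless $G$ is abelian), so it does not ``globalize'' to a spray on $Z$ in the form you wrote. This is harmless for two reasons: the definition of an elliptic submersion only requires fibre-dominating sprays over the sets of a trivializing cover of $X$, which local triviality provides at once; alternatively, one can build a genuinely global fibre-dominating spray by replacing the trivial bundle $\C^N$ with the pullback to $Z$ of the adjoint bundle of vertical Lie algebras and setting $s(z,v)=\exp(v)\cdot z$ with a single exponential, which is well defined on the associated bundle and dominating by transitivity. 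Either fix restores the argument, and the rest of your outline --- the Cartan-pair splitting of the spray parameters as the nonlinear Cousin step, the noncritical/critical induction, and running everything with a compact parameter pair to get bijectivity on all homotopy groups --- is the standard and correct skeleton of the proof.
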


An equivariant version of Grauerts Oka principle with respect to an action of a reductive complex Lie group 
has been proven by Heinzner and Kutzschebauch \cite{HK}. This principle  in particular implies that the method of constructing counterexamples
to the linearization problem, found by Schwarz in the algebraic setting \cite{Schwarz}, does not work in the holomorphic category. Moreover, the above mentioned Oka principle was recently used by Kutzschebauch, L\' arusson and Schwarz  \cite{KLS} to show, among others a strong linearization result:  A generic holomorphic action, which is locally over a
common categorical quotient isomorphic to a linear action on $\C^n$, is in fact globally isomorphic to that
linear action.

The next step in Oka theory was made by Gromov in his seminal paper \cite{Gromov}, which marks  the beginning of modern Oka theory. He introduced the notion of dominating spray and ellipticity (see the last section). The great improvement compared to Grauert's Oka-principle is the fact that not the fibre together with the transition maps of the bundle, but only certain  properties of the fibre totally independent of transition maps allow to derive the conclusions. In the above cited classical works , the structure group was indeed assumed to be a complex Lie group. However, in modern Oka theory the structure group is completely irrelevant.  Moreover, modern Oka theory allows to consider sections of stratified elliptic submersions, generalizing the case of locally trivial fibre bundles. The emphasis shifted from the cohomological to the homotopy theoretic
aspect, focusing on those analytic properties of a complex manifold $Y$ which ensure that every continuous map from a Stein space $X$ to $Y$ is homotopic to a holomorphic map, with natural additions concerning approximation and interpolation of maps that are motivated by the extension and approximation theorems for holomorphic functions on Stein spaces. The approximation and extension is needed for generalizing from maps $X \to Y$ (which can be considered as  sections of the trivial bundle $X \times Y \to X$ with fibre $Y$)  to sections of holomorphic submersions $Z\to X$ with Oka-Forstneri\v c fibres and moreover,  to stratified elliptic submersions.
\subsection{Definition and main features}

\begin{definition} A complex manifold $Y$ is an Oka-Forstneri\v c manifold if every holomorphic map  $f: K \to Y $ from (a neighborhood of) a compact convex set  $K\subset \C^n$ (any dimension $n$) can be approximated uniformly on $K$ by entire maps $\C^n \to Y$.
\end{definition}
The property in this definition is also called Convex Approximation Property (CAP), if the dimension $n$ is fixed we speak of $\rm (CAP)_n$, thus (CAP) means $\rm (CAP)_n$ for all $n$.  By work of Forstneri\v c (CAP) is equivalent to any of thirteen different Oka properties, one of them is mentioned in the following Theorem which includes all all versions of the classical Oka-Grauert principle discussed  in the Introduction.  This theorem answers Gromov's question whether Runge approximation on a certain class of compact sets in Euclidean spaces suffices to infer the Oka property. Since all these thirteen Oka properties are equivalent characterizations of the same class of manifolds Forstneri\v c called them Oka manifolds. In order to honor his work on the equivalence of all the Oka properties the author finds the notation Oka-Forstneri\v c manifolds more appropriate.

\begin{theorem} 
 Let $ \pi: Z \to X$ be a holomorphic submersion of a complex space $Z$ onto a reduced Stein space $X$. Assume that $X$ is exhausted by a sequence of open subsets $U_1 \subset U_2 \subset \cdots \cup_j U_j = X$
 such that each restriction $Z\vert_{U_j} \to U_j$ is a stratified holomorphic fibre bundle whose fibres are Oka manifolds.  Then sections $X\to Z$ satisfy the following
 
{\bf Parametric Oka property (POP)}: Given a compact $\O (X)$-convex subset $K$ of $X$, a closed complex subvariety A of X, compact sets $P_0 \subset P$ in a Euclidean space $\R^m$, and a continuous map $f: P \times X \to Z$ such that

\begin{enumerate}

\item[(a)] for every $p\in P$, $f(p, \cdot) : X \to Z$ is a section of $Z\to X$ that is holomorphic on a neighborhood of $K$ (independent of $p$) and such that $f(p, \cdot)\vert_A$ is holomorphic on $A$, and

\item[(b)] $f(p, \cdot)$ is holomorphic on $X$ for every $p \in P_0$
\end{enumerate} 
 
there is a homotopy $f_t: P\times X \to Z \quad (t\in [0,1]),$ with $f_0 = f$, such that $f_t$ enjoys properties $(a)$ and $(b)$ for all  $t\in [0,1]$, and also the following hold:

\begin{enumerate}

\item[(i)] $f_1 (p, \cdot)$ is holomorphic on $X$ for all $p\in P$

\item[(ii)] $f_t$ is uniformly close to $f$ on $P\times K$ for all $t\in [0,1]$

\item[(iii)] $f_t = f$ on $(P_0 \times X) \cup (P\times A)$ for all $t\in [0,1]$

\end{enumerate}

 \end{theorem}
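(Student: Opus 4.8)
The plan is to deduce the Parametric Oka Property (POP) from the Convex Approximation Property of the fibres by induction over a strictly plurisubharmonic exhaustion of the Stein base, carrying out every construction continuously in the parameter $p\in P$ and leaving the section unchanged over $P_0$ and over $A$. Since $X$ is a reduced Stein space, I would first fix a smooth strictly plurisubharmonic exhaustion function $\rho\colon X\to\R$ whose sublevel sets $X_c=\{\rho<c\}$ are relatively compact, $\O(X)$-convex, and eventually absorb $K$ and any prescribed compact set. The given continuous map $f$ is already holomorphic near $K$ and on $A$, and holomorphic for all $p\in P_0$; the goal is to deform it, rel these data, to a section holomorphic on all of $X$. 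This is achieved by a sequence of extension steps that push the region of holomorphy outward from one sublevel set to the next, together with a convergence argument: one arranges the successive corrections to be small enough on a fixed exhaustion that the limiting homotopy exists and is holomorphic on $X$, while the estimates (ii) on $P\times K$ and the rigidity (iii) over $(P_0\times X)\cup(P\times A)$ are preserved at every stage. Each step is of one of two kinds, according to whether the relevant interval $[c_1,c_2]$ of $\rho$ contains a critical value.

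The analytic heart is the noncritical step. Over a shell $\{c_1\le\rho\le c_2\}$ containing no critical value I would cover the shell by finitely many small sets lying inside the strata of the stratification and inside trivializing charts of the fibre bundle, so that locally $Z$ looks like a product with an Oka fibre $Y$ and a section becomes a map into $Y$. On each such chart the Convex Approximation Property of $Y$ furnishes, over a slightly larger convex model, a holomorphic section approximating the given one; more precisely one builds a dominating fibre-spray of sections and uses it to absorb the local discrepancies. These local modifications are then amalgamated by the gluing lemma for sprays over a Cartan pair $(C,D)$ with $C\cup D$ filling the shell: if the two sprays agree to high order on the overlap $C\cap D$, one splits the resulting transition map into a composition of maps close to the identity, holomorphic over $C$ and over $D$ respectively, and reparametrizes to obtain a single spray over $C\cup D$. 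Iterating over the finitely many charts enlarges the region of holomorphy from $X_{c_1}$ to $X_{c_2}$ while keeping the approximation on $X_{c_1}$ and fixing the section over $A$.

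The critical step, where $\rho$ has an isolated critical point of some Morse index $k$ in the shell, is reduced to the noncritical one by a local extension across the attached handle: in suitable holomorphic coordinates the change of sublevel set is modeled on a convex neighbourhood of the stable manifold, over which the Convex Approximation Property again supplies the needed holomorphic extension, after which the noncritical machinery takes over. The stratification is handled by a secondary induction on the dimension of the strata: one first solves the problem over the closed lowest stratum, where $Z$ restricts to an honest fibre bundle with Oka fibre, and then extends over each successive stratum, using that the section is already holomorphic on the closure of the lower strata, which plays the role of the subvariety $A$. Throughout, the parameter is carried along: the sprays, the convex approximations, and the splittings are all performed continuously in $p\in P$ and held fixed for $p\in P_0$, so that properties (a), (b), (i), (ii), (iii) are maintained.

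The step I expect to be the main obstacle is the gluing, i.e.\ the splitting lemma, in its parametric and relative form. One must write a fibre-preserving self-map of the total space that is close to the identity over $C\cap D$ as a composition $\gamma=\beta\circ\alpha^{-1}$ with $\alpha,\beta$ holomorphic and close to the identity over $C$ and $D$ respectively, continuously in $p\in P$, equal to the identity for $p\in P_0$ and over $A$, and with estimates uniform in $p$. This is a nonlinear Cousin-I problem solved by a fixed-point or implicit-function argument in a Banach space of sections, whose linearization is the bounded solution operator for the additive Cousin problem (equivalently, the $\bar\partial$-equation) on the Cartan pair. Making this operator and its estimates depend continuously on the parameter, and compatible with the prescribed interpolation over $A$ and over $P_0$, is the delicate point; the stratified structure adds the further subtlety that the splitting must respect the filtration by strata. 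Once this parametric, interpolatory splitting and the accompanying spray formalism are in place, the induction over the exhaustion closes and yields POP.
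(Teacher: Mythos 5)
The paper offers no proof of this statement: it is quoted as the culmination of Forstneri\v c's work (the implication $\mathrm{CAP}\Rightarrow\mathrm{POP}$ for sections of stratified holomorphic fibre bundles, see \cite{For}, Chapters 5--6), so there is no in-paper argument to compare yours against. Judged on its own terms, your outline faithfully reproduces the architecture of the proof in the literature: induction over sublevel sets of a strictly plurisubharmonic exhaustion of the Stein base, a noncritical step done by local approximation in trivializing charts followed by gluing over Cartan pairs, a critical step reduced to the noncritical one by extension across the handle attached at a Morse critical point, a secondary induction over the strata, and a convergence scheme, all carried along continuously in $p\in P$ and rel $A$ and $P_0$. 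You also correctly locate the crux in the parametric, interpolatory splitting lemma.

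As a proof, however, this is a roadmap rather than an argument: the steps carrying the mathematical content are named but not executed. Concretely: (1) the passage from CAP --- which only concerns maps from compact convex sets in $\C^n$ into the fibre $Y$ --- to approximation of local holomorphic \emph{sections} over the blocks of your covering is itself nontrivial; it requires constructing local dominating fibre-sprays for the restricted submersion over small Stein pieces of the base and showing that CAP of the fibre yields the needed Runge-type approximation there, and it is precisely at this point that Forstneri\v c's insight (that one needs no global ellipticity of $Y$, only CAP) must be justified rather than assumed. (2) The splitting $\gamma=\beta\circ\alpha^{-1}$ over a Cartan pair, with estimates uniform in $p$, equal to the identity for $p\in P_0$ and over $A$, and compatible with the stratification, is the hard analytic lemma; invoking ``a fixed-point argument whose linearization is the bounded Cousin-I solution operator'' does not establish the required uniform parametric estimates. (3) The convergence of the infinite process needs an explicit scheme of constants summable on a fixed exhaustion together with an argument that the limit homotopy still consists of sections holomorphic on all of $X$ and still satisfies (ii) and (iii). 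None of these points indicates a wrong turn --- the plan is the correct one --- but each is a substantial piece of work in \cite{For}, so the proposal cannot stand as a proof of the theorem.
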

 
 As a general reference for Oka theory we refer to the monograph \cite{For} and the overview article \cite{Forstneric-Larusson}. 
 
 \subsection{Applications and examples} The number of applications of the Oka theory is growing, we already indicated the classical Cousin problems and Grauert's classification of holomorphic vector bundles
 over Stein spaces in the introduction. The only application  we would  like to mention is a recent solution to a problem posed by Gromov, called the Vaserstein problem. It is a natural question 
 about the $K_1$-group of the ring of holomorphic functions or in simple terms it is asking whether (and when) in a matrix whose entries are holomorphic functions (parameters) the Gauss elimination process can be performed in a way holomorphically depending on the parameter. This is to our knowledge the only application where a stratified version of an Oka theorem is needed, i.e., no proof using a non-stratified version is known.
 
 \begin{theorem}
Let $X$ be a finite dimensional reduced Stein space and $f\colon X\to \mbox{SL}_m(\mathbb{C})$ be a holomorphic mapping that is null-homotopic. Then there exist a natural number $K$ and holomorphic mappings $G_1,\dots, G_{K}\colon X\to \mathbb{C}^{m(m-1)/2}$ such that $f$ can be written as a product of upper and lower diagonal unipotent matrices
\begin{equation*}f(x) = \left(\begin{matrix} 1 & 0 \cr G_1(x) & 1 \cr \end{matrix} \right)   
\left(\begin{matrix} 1 & G_2(x) \cr 0 & 1 \cr \end{matrix} \right)  \ldots \left(\begin{matrix} 1 & G_K(x)\cr 0 & 1 \cr \end{matrix} \right) 
\end{equation*} for every $x\in X$.
\end{theorem}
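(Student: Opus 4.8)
The plan is to prove the statement by induction on $m$ and, at each stage, to realize the required factorization as a holomorphic lifting problem that is solved by the stratified Oka principle stated above. For $m=1$ there is nothing to prove, so assume the result for $m-1$. Given $f\colon X\to\mathrm{SL}_m(\mathbb{C})$, the decisive step is to find a holomorphic product $M(x)$ of finitely many, alternately upper and lower, unipotent triangular matrices such that $M(x)f(x)$ has first column equal to $e_1=(1,0,\dots,0)^{\mathsf T}$; then $M(x)f(x)$ takes values in the copy of $\mathrm{SL}_{m-1}(\mathbb{C})$ stabilizing $e_1$, and the inductive hypothesis applied to the residual null-homotopic $\mathrm{SL}_{m-1}$-valued map finishes the job after reassembling the elementary factors. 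Thus everything reduces to this column-reduction step.

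The first column of $f$ is a holomorphic map $c\colon X\to\mathbb{C}^m$ whose value is never zero and is everywhere completable to a unimodular matrix. The column-reduction step asks to solve $M(x)c(x)=e_1$ holomorphically in $x$, where $M(x)$ is a product of a fixed number $K$ of elementary unipotent matrices. Writing
$$\Psi_K\colon\big(\mathbb{C}^{m(m-1)/2}\big)^K\longrightarrow\mathbb{C}^m,\qquad (G_1,\dots,G_K)\longmapsto M(G_1,\dots,G_K)\cdot e_1,$$
for the algebraic map sending the tuple of strictly-triangular parameters to the resulting column, a solution is exactly a holomorphic section over $X$ of the pullback of $\Psi_K$ along $c$. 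The heart of the argument is to choose $K$ and to show that $\Psi_K$ is a surjective stratified elliptic submersion onto the space of nonzero completable columns: using bounded generation of $\mathrm{SL}_m$ by elementary matrices one fixes $K$ making $\Psi_K$ surjective, then stratifies the target into locally closed algebraic sets over each of which $\Psi_K$ is a holomorphic fibre bundle, and finally exhibits dominating sprays on every stratum. The sprays arise by inserting additional elementary factors: right-multiplication of a tuple by further unipotent matrices supplies a holomorphic family of deformations whose differential at the base point spans the fibre, which is precisely Gromov's dominability condition.

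Next I would produce a continuous section of the pullback submersion, which is where null-homotopy enters. The constant identity map lifts trivially with all parameters zero, and the stratified submersion $\Psi_K$ has the homotopy lifting property along strata, so a homotopy from $f$ to a constant can be lifted to yield a continuous lift of $c$ itself; equivalently, the only obstruction to a continuous factorization is a homotopy-theoretic $K_1$-type invariant that vanishes exactly when $f$ is null-homotopic. One must also check that null-homotopy is inherited by the residual $\mathrm{SL}_{m-1}$-valued map so that the induction may proceed, which follows because the reduction was carried out through a homotopy of the identity component.

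Finally, having exhibited the relevant submersion over the finite-dimensional reduced Stein space $X$ as a stratified holomorphic fibre bundle with Oka, indeed elliptic, fibres, and having a continuous section, I would invoke the Oka property for sections of stratified elliptic submersions (the theorem above, in the non-parametric case $P=P_0=\mathrm{pt}$) to deform the continuous section to a holomorphic one. Reading off its components and multiplying out across the induction yields the holomorphic maps $G_1,\dots,G_K\colon X\to\mathbb{C}^{m(m-1)/2}$ of the statement. I expect the principal obstacle to be the geometric analysis of $\Psi_K$: determining a workable $K$, constructing the stratification explicitly, and above all producing honest dominating sprays on every stratum, including the degenerate loci where the fibre dimension jumps. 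It is exactly these jump loci that make a non-stratified Oka theorem insufficient and force the use of the stratified version emphasized above.
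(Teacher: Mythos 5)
The paper itself gives no proof of this theorem: it is quoted as an application, with the proof residing in Ivarsson--Kutzschebauch \cite{IK}, and the surrounding text only records that null-homotopy is necessary and that Vaserstein's theorem supplies the continuous factorization. Measured against that published proof, your outline reproduces its strategy faithfully: induction on $m$ via reduction of one column (there, the last row), recasting the reduction step as finding a holomorphic section of the pullback along $c$ of the algebraic map $(G_1,\dots,G_K)\mapsto M(G_1,\dots,G_K)e_1$ onto $\C^m\setminus\{0\}$, obtaining the continuous section from Vaserstein's result, and concluding with the Oka principle for sections of stratified elliptic submersions. You also correctly locate the technical core --- exhibiting the stratification and honest dominating sprays on every stratum, including the degenerate loci --- but this is precisely where essentially all of the work of \cite{IK} lies, and in your write-up it remains an assertion rather than an argument.

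Beyond that acknowledged omission, there is one step that is genuinely wrong as stated: the claim that null-homotopy is inherited by the residual $\mathrm{SL}_{m-1}(\C)$-valued map ``because the reduction was carried out through a homotopy of the identity component.'' The residual map $A$ is null-homotopic as a map into $\mathrm{SL}_m(\C)$, but the inductive hypothesis needs it null-homotopic into $\mathrm{SL}_{m-1}(\C)$, and the fibration $\mathrm{SL}_{m-1}(\C)\to \mathrm{SL}_m(\C)\to \C^m\setminus\{0\}$ shows that $\pi_k(\mathrm{SL}_{m-1}(\C))\to\pi_k(\mathrm{SL}_m(\C))$ has nontrivial kernel (the image of $\pi_{k+1}$ of the base $\simeq S^{2m-1}$) once $k\ge 2m-2$; so for $X$ of large dimension the implication fails in general. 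The standard repair --- and what \cite{IK} actually does --- is to change the inductive statement: take as hypothesis a \emph{continuous} factorization into $K$ elementary factors (provided once and for all by Vaserstein's theorem), carry that continuous factorization through the column reduction so that the residual map inherits one automatically, and use the Oka principle at each stage to deform it to a holomorphic factorization. With that reformulation your induction closes; without it, the step as you state it has a genuine homotopy-theoretic gap.
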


Here the assumption null-homotopic means that the map is homotopic through continuous maps to a constant map (matrix), which since  Grauert's Oka principle, Theorem \ref{GOP}, is equivalent of being null-homotopic through holomorphic maps. This is an obvious necessary condition since multiplying all lower/upper diagonal
matrices in the product by $t \in [0,1]$ yields a homotopy to the (constant) identity matrix. It is a result of Vaserstein, that null-homotopic is also sufficient in order to factorize the map as a product with continuous entries. Thus we have the Oka principle. For the existence of a  holomorphic factorization there are  only topological obstructions, it exists iff a topological factorization exists.

%
%

Now we come to examples of Oka-Forstneri\v c manifolds:

 A Riemann surface is an Oka-Forstneri\v c manifold iff it is non-hyperbolic, i.e., one of $\PP^1$ $\C$, $\C^*$, or a compact torus $\C/\Gamma$. 
 
 Oka-Forstneri\v c manifolds enjoy  the following functorial properties, for elliptic manifolds (see Definition below) these properties are unknown.
 \begin{itemize}
 
 \item If $\pi: E \to B$ is a holomorphic covering map of complex manifolds then $B$ is Oka-Forstneri\v c iff $E$ is (\cite{For} Prop 5.5.2.). 
 
 \item If $E$ and $X$ are complex manifolds and $\pi: E \to X$ is a holomorphic fibre bundle whose fibre is an Oka-Forstneri\v c manifold, the $X$ is an Oka-Forstneri\v c manifold iff $E$ is (\cite{For} Theorem 5.5.4.).
 
 \item If a complex manifold $Y$ is exhausted by open domains $D_1 \subset D_2 \subset \cdots \subset \cup_{j=1}^\infty = Y$ such that every $D_j$ is an Oka-Forstneri\v c manifold, then $Y$ is an Oka-Forstneri\v c manifold. In particular
 every long $\C^n$ is an Oka-Forstneri\v c manifold. (A manifold is called a long $\C^n$ if all $D_j$ are biholomorphic to $\C^n$. If the inclusion $D_i \subset D_{i+1}$ is not a Runge pair, on those manifolds the ring of holomorphic functions may consist of constants only!!)

\end{itemize} 

The main source of examples are the elliptic manifolds (see Definition \ref{ell} below),  a notion invented by Gromov. This includes by our scheme (1) of implications all holomorphic flexible manifolds and all manifolds with the density property, in particular complex Lie groups and homogeneous  spaces
of Lie groups, the manifolds from the classical theorems of Oka and Grauert. For a Stein manifold ellipticity is equivalent to being an Oka-Forstneri\v c manifold. For general manifolds this is an open question. One possible counterexample is the complement of the ball in $\C^n$, the set
$\{ z \in \C^n  :  \vert z_1\vert^2 + \vert z_2\vert^2 + \ldots + \vert z_n\vert^2 > 1\}$. It was recently shown by Andrist and Wold \cite{AW} that it is not elliptic for $n\ge 3$, whereas it has two "nearby" properties implied by being an  Oka-Forstneri\v c manifold, strongly dominable and $\rm CAP_{n-1}$ (\cite{ForRi}).

\section{Proof of the implications from scheme (1), ellipticity in the sense of Gromov}
First remark that the two bottom up arrows in scheme (1) are obvious from the definitions. In order to prove the left right arrows let's define
 the notions introduced by Gromov \cite{Gromov} revolutionizing Oka theory (see also \cite{For} Chapter 5): 

\begin{definition}  \label{ell} Let $Y$ be a complex manifold.

\begin{enumerate} 

\item A holomorphic spray on $Y$ is a triple $(E, \pi, s)$ consisting of a holomorphic vector bundle $\pi: E \to Y$ (a spray bundle) and a holomorphic map $s: E \to Y$ (a spray map) such that for each $y \in Y$ we have $s (0_y) = y$. 

\item A spray  $(E, \pi, s)$ on $Y$ is dominating on a subset $U \subset Y$  if the differential ${\rm d}_{0_y} s  : T_{0_y} E \to T_y Y $ maps the vertical tangent space $E_y$ of $T_{0_y} E$ surjectively
onto $T_y Y$ for every $y \in U$, $s$ is dominating if this holds for all $y \in Y$.

\item A complex manifold $Y$  is elliptic if it admits a dominating holomorphic spray.

\end{enumerate}

\end{definition}

The main result of Gromov can now be formulated in the following way.

\begin{theorem}

An elliptic manifold is an Oka-Forstneri\v c manifold.

\end{theorem}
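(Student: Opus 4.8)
The plan is to prove the Convex Approximation Property directly: given a holomorphic map $f\colon \Omega \to Y$ on an open neighborhood $\Omega$ of a compact convex set $K \subset \C^n$ and a number $\epsilon > 0$, I must produce an entire map $\tilde f \colon \C^n \to Y$ with $\tilde f$ uniformly $\epsilon$-close to $f$ on $K$. Throughout I fix a dominating holomorphic spray $(E,\pi,s)$ on $Y$, where $E$ has rank $N$.

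\textbf{Linearization by the spray.} First I would shrink $\Omega$ to a convex (hence contractible and Stein) neighborhood of $K$ and pull the spray bundle back along $f$. Since $f^*E \to \Omega$ is a holomorphic vector bundle over a contractible Stein base it is holomorphically trivial, $f^*E \cong \Omega \times \C^N$; composing the bundle map covering $f$ with the spray map $s$ yields a holomorphic map $F\colon \Omega \times \C^N \to Y$ with $F(z,0) = s(0_{f(z)}) = f(z)$ and, by the domination hypothesis, with $\partial_w F(z,0)\colon \C^N \to T_{f(z)}Y$ surjective for every $z$. Thus $F$ is a dominating spray over the map $f$: every small deformation of $f$ into $Y$ is realized by composing $F$ with a $\C^N$-valued function $w(z)$. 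This converts the nonlinear problem of perturbing a $Y$-valued map into the linear problem of choosing sections of a trivial $\C^N$-bundle, where Oka--Weil approximation and Cartan's splitting lemma are available.

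\textbf{Exhaustion by convex bumps.} Because approximation is demanded only on $K$ while the competitor must be entire, I would exhaust $\C^n$ by an increasing sequence of compact convex sets $K = K_0 \subset K_1 \subset \cdots$ with $\bigcup_j K_j = \C^n$, arranged so that each pair $(K_j,K_{j+1})$ is a convex Cartan pair: $K_{j+1} = K_j \cup B_j$ with $B_j$ a small convex bump and $K_j \cap B_j$ again convex. The entire map will be obtained as a limit of holomorphic maps $f_j$ defined on neighborhoods of $K_j$, with $f_0 = f$, each $f_{j+1}$ an approximate holomorphic extension of $f_j$ across the bump $B_j$, controlled so tightly on $K_j$ that the sequence converges.

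\textbf{The bump and gluing step.} Given $f_j$ holomorphic near $K_j$, I would build the dominating spray $F_j$ over $f_j$ as above on a convex neighborhood, produce a rough holomorphic extension of $f_j|_{K_j \cap B_j}$ over the small convex bump $B_j$ (e.g. within a coordinate chart containing its small image), and then fuse the two maps into a single holomorphic map near $K_{j+1}$. The fusion is the heart of the matter: on the overlap the two maps differ by a small amount, which I encode through the dominating spray $F_j$ as a $\C^N$-valued transition, split by Cartan's lemma (solving the additive Cousin/$\bar\partial$ problem on the convex pieces with uniform estimates) into a difference of holomorphic $\C^N$-valued functions on the two sets; feeding these corrections back through $F_j$ adjusts each piece so that they agree on the overlap up to a quadratically smaller error. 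A rapidly converging iteration of this linearized correction removes the error and yields a genuine holomorphic $f_{j+1}$ near $K_{j+1}$ with $f_{j+1}$ as close to $f_j$ on $K_j$ as desired.

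\textbf{Convergence and the main obstacle.} Choosing the successive errors to decay fast enough, the sequence $(f_j)$ converges uniformly on compact subsets of $\C^n$ to an entire map $\tilde f \colon \C^n \to Y$ with $\tilde f$ $\epsilon$-close to $f$ on $K$; this is precisely CAP, so $Y$ is an Oka--Forstneri\v c manifold. The decisive and most delicate step is the gluing: the entire weight of the ellipticity hypothesis is spent there, since it is exactly domination that linearizes the intrinsically nonlinear merging of two manifold-valued maps into an additive splitting problem in $\C^N$, while convexity is used twice, to trivialize the pullback spray bundle and to solve that splitting with estimates. The subtlety I expect to fight is propagating the estimates uniformly through the iteration so as to guarantee convergence while keeping the perturbation confined near $K$.
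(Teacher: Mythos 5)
The paper does not actually prove this theorem: it is quoted as Gromov's result, with only the remark that the argument splits into two stages, the first stage (the only place ellipticity is used) being a homotopy version of the Oka--Weil theorem, which yields CAP. So your proposal is being measured against a citation rather than a written proof. With that understood, your outline is the standard argument of Gromov \cite{Gromov}, carried out in detail in \cite{For}: pull back the dominating spray along $f$ and trivialize it over a convex Stein neighborhood to convert perturbations of a $Y$-valued map into sections of a trivial $\C^N$-bundle, extend across small bumps inside coordinate charts, glue the two pieces by splitting the transition with a Cartan-type lemma with estimates, and iterate to convergence. The architecture is right, and the roles you assign to domination (linearizing the gluing) and to convexity (trivializing the pullback bundle and solving the splitting with bounds) are exactly the ones they play in the literature.

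Two caveats. First, the exhaustion cannot literally consist of compact convex sets with $K_{j+1}=K_j\cup B_j$ where $B_j$ and $K_j\cap B_j$ are convex: if $K_j$ lies in the interior of the convex compact $K_{j+1}$, then a compact convex $B_j$ containing $K_{j+1}\setminus K_j$ must contain $\partial K_{j+1}$ and hence its convex hull, i.e.\ $B_j=K_{j+1}$, so the decomposition is vacuous. The standard repair is to pass from $K_j$ to $K_{j+1}$ through a finite chain of small convex bumps whose intermediate unions are merely polynomially convex; Oka--Weil approximation and the splitting lemma still apply to such sets, but this is where the real bookkeeping lives and your plan as stated skips it. Second, the analytic core --- the rough holomorphic extension across a bump, the gluing of two nearby $Y$-valued maps over a Cartan pair with uniform estimates, and the convergence of the resulting iteration --- is announced rather than executed; these are precisely the lemmas whose proofs occupy the bulk of the published treatment in \cite{For}. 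As a road map your proposal is faithful to the known proof; as a proof it defers the hard steps.
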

Of course Gromov proved the full Oka principle for elliptic manifolds. This proof can now be decomposed in two stages. The main
 (and the only) use of ellipticity is to prove a homotopy version of Runge (Oka-Weil) theorem, which in particular
 gives CAP  (= Oka-Forstneri\v c) and the second stage is CAP implies Oka principle.

Gromov's theorem proves our implication $${\rm\bf holomorphically  \  flexible} \Longrightarrow {\rm \bf Oka-Forstneric  \  manifold}$$ using the following example of a spray given by Gromov and Lemma \ref{finitely};

\begin{example}
Given completely integrable  holomorphic vector fields $\theta_1, \theta_2, \ldots , \theta_N$  on a complex manifold $X$ such that at each point $x \in X$ they span the tangent space,
${\rm span} (\theta_1 (x), \theta_2 (x), \ldots , \theta_N (x) = T_x X$. Let  $\psi^i : \C \times X \to X, \quad (t, x)  \mapsto \psi^i_ t (x)$ denote the corresponding one-parameter subgroups; Then
the map $s: \C^N \times X \to X$ defined by $ ((t_1, t_2, \ldots, t_n), x) \mapsto \psi_{t_n}^n \circ \psi_{t_{n-1}}^{n-1} \circ \cdots \circ \psi_{t_1}^1 (x)$ is of full rank at $t =0$ for any $x$. It is therefore a dominating spray map from the trivial bundle $X \times \C^N \to X$.  
\end{example}
\begin{lemma} \label{finitely}
If a Stein manifold $X$ is holomorphically flexible, then there are finitely many completely integrable holomorphic fields which span the tangent space $T_xX$ at every point $x \in X$
\end{lemma}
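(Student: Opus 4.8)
The plan is to realise the tangent bundle $TX$ (a rank-$n$ holomorphic vector bundle, where $n=\dim X$ and $X_\reg=X$ since $X$ is a manifold) as being spanned fibrewise by the values of finitely many completely integrable fields. First I would record the local statement: at a point $x_0$ flexibility provides completely integrable fields $\theta_1,\dots,\theta_n\in\VFH(X)$ whose values form a basis of $T_{x_0}X$; then the holomorphic section $\theta_1\wedge\dots\wedge\theta_n$ of the line bundle $\det TX=\Lambda^n TX$ is nonzero at $x_0$, hence on an open neighbourhood, so these finitely many fields already span $T_xX$ for all $x$ near $x_0$. By compactness this yields, on every compact subset of $X$, a finite spanning family. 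The whole difficulty is therefore the passage from \emph{finite on each compact set} to \emph{finite on all of} $X$.

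The globalisation I would carry out by induction on the dimension of the degeneracy locus. Having chosen $\theta_1,\dots,\theta_j\in\mathcal S$ (where $\mathcal S$ denotes the set of completely integrable holomorphic fields), set $\Sigma_j=\{x\in X:\theta_1(x),\dots,\theta_j(x)\ \text{do not span}\ T_xX\}$; this is an analytic subset of $X$, cut out by the vanishing of the relevant minors, with $\Sigma_0=X$. I want to append one completely integrable field $\theta_{j+1}$ whose value escapes the current span $\mathrm{span}(\theta_1(x),\dots,\theta_j(x))$ at a generic point of every positive-dimensional component of $\Sigma_j$, so that $\dim\Sigma_{j+1}<\dim\Sigma_j$. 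Since $\dim X=n$ is finite, after a number of steps bounded in terms of $n$ the degeneracy locus becomes empty and the accumulated fields $\theta_1,\dots,\theta_N$ span every fibre, which is the assertion.

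The heart of the argument, and the step I expect to be the main obstacle, is producing such a single field $\theta_{j+1}$ \emph{simultaneously} over the whole, possibly non-compact, analytic set $\Sigma_j$, which may well have infinitely many irreducible components. Pointwise flexibility alone is not sufficient here: an infinite family of global sections generating every stalk of a coherent sheaf on a Stein manifold need not contain a finite spanning subfamily, since zero sets can escape to infinity. Thus the special structure of $\mathcal S$ must be exploited. Two features are available: $\mathcal S$ is invariant under push-forward by holomorphic automorphisms, and $\Aut_{hol}(X)$ acts transitively on $X$ because $X$ is flexible (as shown above in the implication $(3)\Rightarrow(1)$); moreover $f\theta\in\mathcal S$ whenever $\theta\in\mathcal S$ and $f$ is a first integral of $\theta$.

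Using these features I would assemble, from the local fields of the first step, a finite-dimensional family $\{\theta^{(w)}\}_{w\in W}\subset\mathcal S$ that dominates $T_xX$ as $w$ varies, and then invoke a parametric transversality argument of Bertini type: the set of parameters $w$ for which $\theta^{(w)}$ fails to escape the current span at the generic point of some component of $\Sigma_j$ is a proper analytic, hence nowhere dense, condition, so a generic $w$ works for all components at once. Controlling this genericity over the non-compact base, equivalently verifying that the bad parameters form a thin set even when $\Sigma_j$ has infinitely many components, is the delicate point; a Baire category argument over the countably many components of $\Sigma_j$, combined with the density of the escaping push-forwards $\phi_*\theta$ for automorphisms $\phi$ near the identity, is the route I would take to close it.
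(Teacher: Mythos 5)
Your overall strategy coincides with the paper's: start from $n$ integrable fields spanning $T_{x_0}X$, hence spanning off a proper analytic set with countably many irreducible components, and then kill the degeneracy locus by induction on its dimension via a genericity argument over those components. However, two steps do not go through as you state them. First, appending a \emph{single} field $\theta_{j+1}$ that escapes $\mathrm{span}(\theta_1(x),\dots,\theta_j(x))$ at a generic point of each component of $\Sigma_j$ does \emph{not} give $\dim\Sigma_{j+1}<\dim\Sigma_j$: it raises the generic rank on each component by at most one, so any component on which the corank was $\ge 2$ is still entirely contained in $\Sigma_{j+1}$ and the dimension stagnates. You must either iterate this up to $n$ times per dimension level (keeping track of the generic corank on the top-dimensional components as the decreasing invariant), or do what the paper does: adjoin all $n$ fields $\Phi_*\theta_1,\dots,\Phi_*\theta_n$ for a single automorphism $\Phi$ with $\Phi(X\setminus A)\cap A_i\neq\emptyset$ for every component $A_i$ of the degeneracy set $A$. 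At a point of $A_i$ of the form $\Phi(x)$ with $x\notin A$ these $n$ fields span the whole tangent space, so the new degeneracy locus meets each $A_i$ in a proper analytic subset and the dimension genuinely drops.

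Second, the decisive step is only named, not proved. The Bertini-type variant is moreover circular: asserting that for each component the bad parameters form a \emph{proper} analytic subset of $W$ presupposes that your fixed finite-dimensional family already contains, for every one of the infinitely many components, a member escaping the span at its generic point --- which is essentially the global statement being proved. The version that works (and is the paper's) abandons finite-dimensional families and runs Baire category on $\Aut_{hol}(X)$ itself: equip it with a complete metric controlling both $\Phi$ and $\Phi^{-1}$ uniformly on an exhaustion by compacts; check that each $Z_i=\{\Psi:\ \Psi(A_i)\cap(X\setminus A)\neq\emptyset\}$ is open; check that $Z_i$ is dense by composing a given $\Psi$ with generic elements of the flow of a completely integrable field not tangent to $A_i$ (such a field exists precisely because, by flexibility, the integrable fields span every tangent space); conclude $\bigcap_i Z_i\neq\emptyset$. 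You gesture at exactly these ingredients in your last sentence, but the completeness of the metric space, the openness, and the density are the substance of the proof and none of them is established in your proposal.
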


\begin{proof}To prove that there are finitely many completely integrable holomorphic fields that span each tangent space,
let us start with $n$ fields $\theta_1, \ldots, \theta_n$ which
span the tangent space at some point $x_0$ and thus outside a proper analytic subset $A$. The set $A$
may have countably many irreducible components $A_1, A_2, A_3, \ldots$.

It suffices now
to find a  holomorphic automorphism $\Phi \in \Aut_{\rm hol} (X)$ such that
$\Phi (X \setminus  A) \cap A_i \ne \emptyset$ for every  $ i = 1, 2, 3, \ldots$. Indeed,
for such an automorphism $\Phi$ the completely integrable holomorphic vector  fields $\Phi_*(\theta_1), \ldots,
\Phi_* (\theta_n)$ span the tangent space at a general point in each $A_i$, i.e., together with the fields
$\theta_1, \ldots, \theta_n$ they span the tangent space at each point outside an analytic subset $B$ of a
smaller dimension than $A$. Then the induction by dimension implies the desired conclusion.

In order to construct $\Phi$ consider a monotonically increasing sequence of compacts $K_1 \subset K_2 \subset \ldots $ in $X$ such
that $\bigcup_i K_i =X$ and a closed imbedding $\iota : X \hookrightarrow \C^m$. For every continuous map $\varphi : X \to \C^m$ denote
by $||\varphi ||_i$ the standard norm of the restriction of $\varphi$ to $K_i$.
Let $d$ be the metric on the space $ \Aut_{\rm hol}  (X)$ of holomorphic automorphisms  of $X$
given by the formula $$d(\Phi , \Psi ) =\sum_{i=1}^\infty 2^{-i}( \min (||\Phi -\Psi ||_{i},1)+ \min (||\Phi^{-1} -\Psi^{-1} ||_{i},1) \eqno{(4.1)}$$
where automorphisms $\Phi^{\pm 1}, \Psi^{\pm 1} \in \Aut_{\rm hol} (X)$  are viewed as continuous  maps from $X$ to $\C^m$.
This metric makes $ \Aut_{\rm hol}  (X)$ a complete metric space. 

Set $Z_i = \{ \Psi \in  \Aut_{\rm hol} (X) : \Psi (A_i) \cap (X\setminus A) \ne \emptyset \} $. Note that $Z_i$ is open
in $ \Aut_{\rm hol} (X)$ and let us show that it  is also everywhere dense. 

Since completely integrable holomorphic fields generates the tangent space
at each point of $X$, we can choose such a field  $\theta$  non-tangent to $A_i$. Then for every $\Psi \in  \Aut_{\rm hol} (X)$  its composition with general
elements of the flow induced by $\theta$ is  in $Z_i$.
That is,  a perturbation of $\Psi$ belongs to $Z_i$ which proves that $Z_i$ is everywhere dense in $ \Aut_{\rm hol} (X)$.
By the Baire category theorem the set $\bigcap_{i=1}^\infty Z_i$ is not empty which yields
the existence of the desired automorphism.
\end{proof}

Since the question whether holomorphic maps are approximable by morphisms is an important issue in algebraic geometry, we would like to remark  at this point that there is an application of Oka-theory to  this question.
Clearly there is an obvious notion of algebraic spray, thus algebraic ellipticity. Also the proof of the above lemma generalizes showing that an algebraically flexible manifold is algebraically elliptic.
These algebraically elliptic manifolds satisfy an algebraic version of CAP. However, in the algebraic category,
 simple examples show that algebraic CAP does not imply
 the full algebraic Oka principle, but only a weaker
 statement that being approximable by algebraic morphisms
 is a homotopy invariant property (at least for maps
 from affine algebraic manifolds to algebraically elliptic
 manifolds). For a precise treatment of this question we refer to \cite{For}  7.10.
\\
\\

The implication ${\rm  \bf DP} \Longrightarrow {\rm\bf holomorphically \  flexible } $ is contained in the following Lemma

\begin{lemma}\label{span} If a Stein manifold $X$ has the density property, the completely integrable holomorphic vector fields span the tangent space at each point $x \in X$.

\end{lemma}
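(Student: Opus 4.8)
The plan is to argue by contradiction, combining the density property with Cartan's Theorem~A and the bracket-limit computation already used above for the implication $(1)\Rightarrow(3)$.

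First I would record the one input from Stein theory. Since $X$ is Stein, its tangent bundle is generated by global holomorphic sections (Cartan's Theorem~A), so for every $x\in X$ the evaluation map $\mathrm{ev}_x:\VFH(X)\to T_xX$, $\theta\mapsto\theta(x)$, is surjective, and it is continuous for the compact-open topology. Now fix a point $x_0\in X$ and suppose, for contradiction, that the values $\theta(x_0)$ of the completely integrable holomorphic fields $\theta$ fail to span $T_{x_0}X$. Then there is a proper linear subspace $W\subsetneq T_{x_0}X$ with $\theta(x_0)\in W$ for every completely integrable $\theta$; note that $W$, being finite-dimensional, is closed.

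Next I would show that the whole Lie algebra $\g$ generated by the completely integrable fields still evaluates into $W$ at $x_0$. Sums and scalar multiples of fields whose value at $x_0$ lies in $W$ again take values in $W$, so only brackets require an argument, and here I would reuse the computation from the $(1)\Rightarrow(3)$ proof: for completely integrable $v,w$ one has $\{v,w\}=\lim_{t\to0}\frac{\phi_t^*(w)-w}{t}$, where $\phi_t$ is the flow of $v$. Each $\phi_t^*(w)$ is a pullback of a completely integrable field by an automorphism, hence again completely integrable, so $\phi_t^*(w)(x_0)\in W$; since $W$ is closed and compact-open convergence forces convergence of the values at $x_0$, we get $\{v,w\}(x_0)\in W$. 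Iterating over the generators yields $\mathrm{ev}_{x_0}(\g)\subseteq W$.

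Finally I would derive the contradiction from the density property. By Theorem~A choose $\eta_1,\dots,\eta_n\in\VFH(X)$, with $n=\dim X$, whose values $\eta_1(x_0),\dots,\eta_n(x_0)$ form a basis of $T_{x_0}X$. By the density property each $\eta_j$ can be approximated, uniformly on a compact neighborhood of $x_0$, by some $\xi_j\in\g$, so in particular $\xi_j(x_0)$ is as close as we wish to $\eta_j(x_0)$. Since the condition that an $n$-tuple of vectors be a basis is open, a sufficiently good approximation already gives that $\xi_1(x_0),\dots,\xi_n(x_0)$ span $T_{x_0}X$, i.e.\ $\mathrm{ev}_{x_0}(\g)=T_{x_0}X$. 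This contradicts $\mathrm{ev}_{x_0}(\g)\subseteq W\subsetneq T_{x_0}X$, completing the proof. I expect the only delicate point to be the bracket step, namely the fact that pulling a completely integrable field back by the flow of another completely integrable field preserves complete integrability, which is exactly what lets $\{v,w\}$ be realized as a compact-open limit of completely integrable fields and hence forces its value at $x_0$ into the closed subspace $W$; everything else is soft, with Theorem~A supplying enough global fields and openness of the spanning condition converting topological density into pointwise spanning.
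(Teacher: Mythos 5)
Your proof is correct and follows essentially the same route as the paper: the density property reduces the claim to showing that Lie combinations of completely integrable fields evaluate into the same subspace of $T_{x_0}X$ as the fields themselves, which both you and the paper handle via the formula $[\nu,\mu]=\lim_{t\to0}\frac{\phi_t^*(\nu)-\nu}{t}$ exhibiting brackets as compact-open limits of (linear combinations of) completely integrable fields. The only differences are cosmetic: you phrase it as a contradiction and make explicit the appeal to Cartan's Theorem~A and to the closedness of the subspace $W$, points the paper leaves implicit.
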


\begin{proof}
It follows from  the   density property
that  Lie combinations of completely integrable holomorphic vector fields  span the tangent space $T_xX$  at any given
point $x \in X$. Observe that every Lie bracket $[\nu , \mu ]$ of completely integrable holomorphic  vector fields 
can be approximated by a linear combination of such fields which follows immediately from
the equality $[ \nu , \mu ] = \lim_{t \to 0}  {\frac{\phi_t^* (\nu ) - \nu}{t}}$ where $\phi_t$ is
the flow generated by $\mu$. Thus the completely integrable holomorphic vector fields span  $T_x X$ at any $x \in X$.
\end{proof}

\section {Concluding remarks, Open problems}

There is  also another property which has similar consequences as the density property for holomorphic automorphisms preserving a
volume form.

\begin{definition}\label{1.20}
Let a complex manifold  $X$ be
equipped with a holomorphic volume form $\omega$
(i.e. $\omega$ is nowhere vanishing section of the canonical
bundle). We say that $X$ has the volume density property (VDP) with
respect to $\omega$ if in the compact-open topology the Lie
algebra  generated by completely integrable  holomorphic
vector fields $\nu$ such that $\nu (\omega)=0$, is dense in the
Lie algebra of all holomorphic vector fields that
annihilate $\omega$ (note that condition $\nu (\omega)=0$ is
equivalent to the fact that $\nu$ is of $\omega$-divergence zero).
If $X$ is affine algebraic we say that $X$ has the algebraic
volume density property (AVDP) with respect to an algebraic volume form $\omega$ if the Lie
algebra  generated by completely integrable algebraic vector
fields $\nu$ such that $\nu (\omega)=0$, coincides with the Lie
algebra  of all algebraic vector fields that annihilate
$\omega$.

\end{definition}
 
For Stein manifolds with the volume density property (VDP) an Anders\'en-Lempert theorem for volume preserving maps holds. The implication {\bf (AVDP)} $\Rightarrow$ {\bf (VDP)} holds but its proof is not
trivial (see \cite{KaKu4}). Also {\bf (VDP)} $\Rightarrow$ {\bf holomorphic flexibility} is true (see \cite{KaKu3}). Thus we can have a scheme of implications like (1) with (DP) replaced by  (VDP) and (ADP)
replaced by (AVDP).

Volume density property and density property are not implied by each other, if $X$ has density property it may not even admit a holomorphic volume form, if $X$ has  volume density property
with respect to one volume form it may not have it with respect to another volume form and there is no reason to expect it has density property. For example, $(\C^*)^n$ for $\quad n>1$ has (algebraic) volume density property with respect to the Haar form, it does not have algebraic density property \cite{A1} and it is expected not to have density property. It is a potential counterexample to the reverse of the left horizontal arrow in scheme (1). 

Concerning the reverse implications in scheme (1):  The variety $(\C^*)^n, n>1 $  is an obvious counterexample to the reverse of the right vertical  arrow, the others are more delicate.

\smallskip\noindent
\textbf{Open Problem:} 
Which of the other three implications in scheme (1) are  reversible for a Stein manifold (resp. smooth affine algebraic variety for the vertical arrow)?
\smallskip\noindent

The main problem here  is that  no method is known how to classify (meaning exclude the existence of any other than the obvious) completely integrable holomorphic vector fields on Stein manifolds with any of our
flexibility properties. There is not even a classification of completely integrable holomorphic vector fields on $\C^2$ available.


\begin{thebibliography}{99}



\bibitem{A} E.~Anders\'en, \emph{Volume-preserving automorphisms of $\C^n$}, Complex Variables Theory Appl. \textbf{14} (1990), no. 1-4, 223--235.

\bibitem{A1} E.~Anders\'en, \emph{Complete vector fields on $(\C^*)^n$}, Proc. Amer. Math.
Soc. \textbf{128} (2000), no. 4, 1079--1085.

\bibitem{AL} E.~Anders\'en, L.~Lempert, \emph{On the group of holomorphic
automorphisms of $\C^n$}, Invent. Math.  \textbf{110} (1992), no.
2, 371--388.

\bibitem{AFKKZ} I.~ Arzhantsev, H.~ Flenner, S.~ Kaliman, F.~ Kutzschebauch, M.~ Zaidenberg,
{\em Flexible varieties and automorphism groups},   Duke Math. J. \textbf{162}, (2013), no.4, 767--823






\bibitem{AW} R.~ Andrist, E.~ Fornaess Wold, {\em The complement of the closed unit ball in $\mathbb C^3$ is not subelliptic.} arXiv:1303.1804 (2013)



  
\bibitem{D}  F. ~Donzelli {\em Algebraic density property of Danilov-Gizatullin surfaces.}  Math. Z. \textbf{272}  (2012), no. 3-4, 1187Ð1194.

\bibitem{DDK} F. ~Donzelli, A. Dvorsky, S. Kaliman, {\em Algebraic density property
of homogeneous spaces}, Transformation Groups,
\textbf{15:3} (2010) 551-576.



\bibitem{DK}
H.~Derksen, F.~Kutzschebauch,
{\em Global holomorphic linearization of actions of compact Lie groups on
$\C^n$},
Contemporary Mathematics {\bf 222} (1999), p.~201--210.


\bibitem{DK1}
H.~Derksen, F.~Kutzschebauch, \emph{Nonlinearizable holomorphic group actions.}
 Math. Ann. \textbf{311} (1998), no. 1, 41--53.


\bibitem{EG}
Y.\ Eliashberg and M.\ Gromov, \emph{Embeddings of Stein manifolds of dimension $n$ into the affine space of dimension $3n/2+1$}, Ann.\ of Math. (2), {\bf 136} (1992), no.\ 1, 123--135.

\bibitem{FKZ} H.~ Flenner, S.~Kaliman, M.~ Zaidenberg
{\em The Gromov-Winkelmann theorem for flexible varieties.}, arXiv:1305.6417,  (2013)

\bibitem{Fo}
O.~Forster, {\em Plongements des vari\'et\'es de Stein},
Comm.\ Math.\ Helv., {\bf 45} (1970), p.~170--184.

\bibitem{For1} F.~Forstneri\v c {\em Oka manifolds.} C. R. Acad. Sci. Paris, Ser. I \textbf{347}, 1017-1020 (2009)

\bibitem{For}
Forstneri\v c, F.  \textit{Stein manifolds and holomorphic mappings.}  Ergebnisse der Mathematik und ihrer Grenzgebiete.  3.\ Folge, 56.  Springer-Verlag, 2011.

\bibitem{Forstneric-Larusson}
Forstneri\v c, F.\ and F.\ L\'arusson.  \textit{Survey of Oka theory.}  New York J.\ Math.\ \textbf{17a} (2011) 11--38.

\bibitem{ForRi} F.~Forstneri \v c, T.~Ritter, {\em Oka properties of ball complements.} arXiv:1303.2239 (2013)

\bibitem{FR} F. ~Forstneri\v c, J.-P.~Rosay,  \emph{Approximation of biholomorphic mappings by automorphisms of $\C^n$}, Invent. Math. \textbf{112} (1993), no. 2, 323--349.

\bibitem{FW}   F.~  Forstneric, E.~ Fornaess Wold   {\em Embeddings of infinitely connected planar domains into $C^2$},
Analysis and  PDE, to \textbf{6} (2013), no.2, 499-514.

\bibitem{FW1}  F.~  Forstneric, E.~ Fornaess Wold {\em Bordered Riemann surfaces in $\C^2$.} J. Math. Pures Appl. (9) 91 (2009), no. 1, 100Ð114.

\bibitem{Grauert}
Grauert, H.  \textit{Analytische Faserungen \"uber holomorph-vollst\"andigen R\"aumen.}
 Math.\ Ann. \textbf{135} (1958) 263--273. 

\bibitem{GK} H.~Grauert, H.~Kerner, {\em Approximation von holomorphen Schnittfl\"achen in Faserb\"undeln mit homogener Faser.} Arch. Math., \textbf{14}, 328--333 (1963)

\bibitem{Gromov}
Gromov, M.  \textit{Oka's principle for holomorphic sections of elliptic bundles.}  J.\ Amer.\ Math.\ Soc.\ \textbf{2} (1989) 851--897.

\bibitem{HK} Heinzner, P.; Kutzschebauch, F.: {\it An equivariant version of Grauert's Oka principle.}
 Invent. math. 119, 317-346 (1995)

\bibitem{Huckleberry}
Huckleberry, A.\ T.  \textit{Actions of groups of holomorphic transformations.}  Several complex variables, VI, 143--196,
Encyclopaedia Math.\ Sci., 69.  Springer-Verlag, 1990.

\bibitem{IK} Ivarsson, B., Kutzschebauch, F. {\it   Holomorphic factorization of  mappings into ${SL}_n(\C)$},  Ann. Math. 175 (2012), no. 1., 45--69

\bibitem{Ka}
S.~Kaliman, \emph{Extensions of isomorphisms between affine algebraic subvarieties of $k^n$ to automorphisms of $k^n$},
Proc. Amer. Math. Soc. \textbf{113} (1991), no. 2, 325--33

\bibitem{KaKu1} S.~Kaliman, F.~Kutzschebauch {\em Criteria for the density
property of complex manifolds}, Invent. Math. \textbf{172} (2008), no. 1, 71--87.

\bibitem{KaKu2} S.~Kaliman, F.~Kutzschebauch, {\em Density
property for hypersurfaces $uv=p({\bar x})$}, Math. Z. \textbf{258} (2008), no. 1, 115--131.

\bibitem{KaKu3} S.~Kaliman, F.~Kutzschebauch, {\em On the present state of the Andersen-Lempert theory}, In: Affine Algebraic Geometry: The Russell Festschrift,
85--122. Centre de Recherches Math\'ematiques. CRM Proceedings and
Lecture Notes 54, 2011.

\bibitem{KaKu4} S.~Kaliman, F.~Kutzschebauch,  {\em Algebraic volume density property of affine algebraic manifolds},
Invent. Math. \textbf{181:3} (2010) 605-647.

\bibitem{Ku}
Kutzschebauch, F.  \textit{Compact and reductive subgroups of the group of holomorphic automorphisms of $\C^n$.}  Singularities and complex analytic geometry (Kyoto, 1997).  S\= urikaisekikenky\= usho K\= oky\= uroku no.\ 1033 (1998) 81--93. 


\bibitem{KLS}  Kutzschebauch, F., Larusson, F., Schwarz G. W. {\it An Oka principle for equivariant isomorphisms} J. reine angew. Math., 22p. DOI 10.1515/crelle-2013-0064
 


\bibitem{Kutzschebauch-Lodin}
Kutzschebauch, F.\ and S.\ Lodin.  \textit{Holomorphic families of non-equivalent embeddings and of holomorphic group actions on affine space.}  Duke Math.\ J.\ \textbf{162} (2013) 49--94.


\bibitem{KLL} F.~Kutzschebauch, A.~Liendo, M.~ Leuenberger, {\em On the algebraic density property for affine toric varieties.}, preprint 2013

\bibitem{La} F.~L\' arusson,  {\em Model structures and the Oka principle.} J. Pure Appl. Algebra \textbf{192} (2004), no. 1-3, 203Ð-223.

\bibitem{R} K.~J.~Ramspott  {\em Stetige und holomorphe Schnitte in B\"undeln mit homogener Faser.} Math. Z. \textbf{89}, 234--246 (1965)


\bibitem{Sch}
J.~Sch\"urmann,
{\em Embeddings of Stein spaces into affine spaces of minimal dimension},
Math. Ann. {\bf 307} (1997), no. 3, p.~381--399.

\bibitem{Schwarz}
Schwarz, G.\ W.  \textit{Exotic algebraic group actions.}  C.\ R.\ Acad.\ Sci.\ Paris S\'er.\ I Math.\ \textbf{309} (1989) 89--94. 


\bibitem{Su}  M.~Suzuki, \emph{Sur les opérations holomorphes du groupe additif complexe
sur l'espace de deux variables complexes}, Ann. Sci. Scuola Norm. Sup. \textbf{10} (1977), no. 4, 517--546.



\bibitem{V1}  D. ~Varolin, \emph{The density property for complex manifolds
and geometric structures}, J. Geom. Anal. \textbf{11} (2001), no. 1,
135--160.

\bibitem{V2}  D. ~Varolin, \emph{The density property for complex manifolds
and geometric structures. II}, Internat. J. Math. \textbf{11}
(2000), no. 6, 837--847.

\bibitem{W}
 E.~ Forn\ae ss Wold,  \emph{Embedding Riemann surfaces properly into $\Bbb C^2$}, Internat. J. Math. \textbf{17} (2006), no. 8, 963--974.




\end{thebibliography}
\end{document}